\theoremstyle{plain}
\newtheorem{thm}{\textbf{Theorem}}
\newtheorem{lem}{\textbf{Lemma}}
\newtheorem{cor}{\textbf{Corollary}}
\newtheorem{prop}{\textbf{Proposition}}
\newcommand{\A}{\Bbb{A}}
\newcommand{\R}{\Bbb{R}}
\newcommand{\Q}{\Bbb{Q}}
\newcommand{\F}{\Bbb{F}}
\newcommand{\Z}{\Bbb{Z}}
\newcommand{\T}{\Bbb{T}}
\newcommand{\p}{\frak{p}}
\newcommand{\vv}{\tilde{v}}
\newcommand{\m}{\mathfrak{m}}
\newcommand{\K}{\mathcal{K}}
\newcommand{\Frob}{\text{Frob}}
\newcommand{\Res}{\text{Res}}
\newcommand{\Hom}{\text{Hom}}
\newcommand{\Ann}{\text{Ann}}
\newcommand{\im}{\text{im}}
\newcommand{\Gal}{\text{Gal}}
\newcommand{\GL}{\text{GL}}
\newcommand{\Spec}{\text{Spec}}
\newcommand{\Norm}{\text{Norm}}
\newcommand{\soc}{\text{soc}}
\newcommand{\y}{\hspace{6pt}}
\newcommand{\mb}{\mathbb}
\newcommand{\mc}{\mathcal}
\newcommand{\mf}{\mathfrak}
\title{{\bf{Strong local-global compatibility in the $p$-adic Langlands program for $U(2)$}}}
\author{Przemyslaw Chojecki $\y$ Claus Sorensen}
\begin{document}

\date{\today}

\maketitle

\begin{abstract}
For certain mod $p$ Galois representations $\bar{\rho}$, arising from modular forms on definite unitary groups in two variables, we express the $\bar{\rho}$-part of completed cohomology $\hat{H}_{\bar{\rho}}^0$ (away from $\Sigma=\Sigma_p\cup \Sigma_0$) as a tensor product $\Pi_p\otimes \Pi_{\Sigma_0}$. Here $\Pi_p$ is attached to the universal deformation $\rho^{univ}$ via the $p$-adic local Langlands correspondence for $\GL_2(\Q_p)$, and $\Pi_{\Sigma_0}$ is given by the local Langlands correspondence in families, of Emerton and Helm. 

\footnote{{\it{Keywords}}: Galois representations, automorphic forms, $p$-adic Langlands program}
\footnote{{\it{2000 AMS Mathematics Classification}}: 11F33.}

\end{abstract}

%----------------------------------------------------------------------------------------------------------
%\onehalfspacing

\section{Introduction}

We continue the study from \cite{cs} of the completed cohomology $\hat{H}^0$ of definite unitary groups in two variables; or more precisely, the piece associated with a modular mod $p$ Galois representation $\bar{\rho}$, which is irreducible at $p$. The goal of this paper is to explicitly describe $\hat{H}_{\bar{\rho}}^0$ as a tensor product of representations arising from $p$-adic local Langlands for $\GL_2(\Q_p)$ and local Langlands in families (defined away from $p$) respectively. 

\medskip

\noindent Our unitary groups are defined relative to a fixed CM-extension $\K/F$. Thus $U=U(D,\star)_{/F}$, where $D$ is a quaternion $\K$-algebra with involution $\star$. We will always assume $G=\Res_{F/\Q}(U)$ is compact at infinity, and split at $p$. Here $p>2$ is a fixed prime, which splits in $\K$. Thus $G(\R)$ is a product of copies of $U(2)$, and $G(\Q_p)$ is a product of copies of $\GL_2(\Q_p)$. Let $\Sigma_p$ denote the set of places $v$ of $F$ above $p$. Choose a finite set of banal places $\Sigma_0$, disjoint from $\Sigma_p$, and let $\Sigma=\Sigma_0\cup \Sigma_p$. We assume $U$ splits at all these places, and for each $v \in \Sigma$ we once and for all choose a divisor $\vv|v$ in $\K$. Via this selection, we identify $U(F_v)\simeq \GL_2(\K_{\vv})$ -- which is well-defined up to conjugacy.

\medskip

\noindent Throughout, we fix a "large enough" coefficient field -- a finite extension $E/\Q_p$, with integers $\mc{O}=\mc{O}_E$, uniformizer $\varpi=\varpi_E$, and residue field $k=k_E$. We start off with an (absolutely) irreducible mod $p$ Galois representation,
$$
\bar{\rho}:\Gamma_{\K}=\Gal(\bar{\K}/\K)\longrightarrow \GL_2(k), 
$$
which we assume to be modular. That is, $\bar{\rho}\simeq \bar{\rho}_{\pi}$, for some automorphic representation $\pi$ of $G(\A)$ of weight $V$ and level $K=\prod_v K_v$. Throughout, we impose the following conditions on $\bar{\rho}$:

\begin{itemize}
\item $\bar{\rho}(\Gamma_{F(\zeta_p)})$ is big (in a formal sense).
\item $\bar{\rho}|_{\Gamma_{\K_{\vv}}}$ is irreducible, for all $v|p$.
\item $K=K_{max}\subset G(\A_f)$ is maximal, and hyperspecial outside $\Sigma$.
\item $V$ is in the Fontaine-Laffaille range $[0,p-2)$, at each infinite place.
\end{itemize}

\noindent (All but the second one can be traced back to the setup in \cite{CHT}.)

\medskip

\noindent Let $\m=\m_{\bar{\rho}}$ be the associated maximal ideal in the Hecke algebra. Here we are a bit imprecise about which Hecke algebra; we pull-back $\m$ via surjections of Hecke algebras without mention. Usually we view $\m$ as an ideal of $\T_{\Sigma}$, the quotient of the polynomial algebra $\T_{\Sigma}^{abs}$, in variables $T_w^{(1)}$ and $T_w^{(2)}$, which acts faithfully on the completed cohomology,
$$
\text{$\hat{H}^0(K^{\Sigma})_{\mc{O}}=\varinjlim_{K_{\Sigma_0}} \hat{H}^0(K_{\Sigma_0}K^{\Sigma})_{\mc{O}}$, $\y$
$\hat{H}^0(K^p)_{\mc{O}}=(\varinjlim_{K_p}H^0(K_pK^p)_{\mc{O}})^{\wedge}$.}
$$ 
We let $\rho_{\m}:\Gamma_{\K}\rightarrow \GL_2(\T_{\Sigma,\m})$ be the universal modular deformation of $\bar{\rho}$ over the "big" localized Hecke algebra $\T_{\Sigma,\m}$, which acts faithfully on $\hat{H}^0(K^{\Sigma})_{\mc{O},\m}$. This is our main object of interest. Our goal in this paper is to factor the $U(F_{\Sigma})$-action as a tensor product of a $G(\Q_p)$-representation $\Pi_p$, and a $U(F_{\Sigma_0})$-representation $\Pi_{\Sigma_0}$ (both over $\T_{\Sigma,\m}$). 

\medskip

\noindent At each place $v|p$, consider the restriction $\bar{\rho}|_{\Gamma_{\K_{\vv}}}$, which is irreducible by assumption -- and in particular in the image of Colmez's Montreal functor. (Recall that $\K_{\vv}=\Q_p$, so this makes sense.) Thus $\bar{\rho}|_{\Gamma_{\K_{\vv}}}\leftrightarrow \bar{\pi}_{\vv}$ for some $k$-representation $\bar{\pi}_{\vv}$ of $\GL_2(\K_{\vv})$. We refer to section 4.4 in \cite{cs} for a review of the salient facts. The correspondence pairs deformations, so $\rho_{\m,\vv}$ corresponds to some deformation $\Pi_{\m,\vv}$ of $\bar{\pi}_{\vv}$ over $\T_{\Sigma,\m}$. We let $\Pi_p=\hat{\otimes}_{v|p}\Pi_{\m,\vv}$, viewed as a representation of $G(\Q_p)$ over $\T_{\Sigma,\m}$. Here $\hat{\otimes}$ denotes the $\varpi$-adically completed tensor product.

\medskip

\noindent At each place $v \in \Sigma_0$, the restriction $\rho_{\m,\vv}: \Gamma_{\K_{\vv}}\rightarrow \GL_2(\T_{\Sigma,\m})$ yields a corresponding representation 
$\Pi_{\m,\vv}$ of $\GL_2(\K_{\vv})$ over $\T_{\Sigma,\m}$ via local Langlands in families -- as developed by Emerton and Helm in \cite{eh} and \cite{He} (see 2.3 for a quick summary of its defining properties). The normalization here is that $\Pi_{\m,\vv}$ is {\it{cotorsion}}-free (as opposed to torsion-free), and thus its smooth {\it{dual}} is the representation which interpolates the local Langlands correspondence, in the natural sense. We let $\Pi_{\Sigma_0}=(\otimes_{v \in \Sigma_0} \Pi_{\m,\vv})_{ctf}$; the largest cotorsion-free submodule of the tensor product (cf. Proposition 6.4.2 in \cite{eh}). As always, via the selection of divisors $\{\vv\}_{v \in \Sigma_0}$, we view $\Pi_{\Sigma_0}$ as a representation of
$U(F_{\Sigma_0})$.

\medskip

\noindent The main result we present in this note is the following.

\begin{thm}
With notation and assumptions as above, there is a $G(\Q_p)\times U(F_{\Sigma_0})$-equivariant isomorphism of $\T_{\Sigma,\m}$-modules,
$$
\Pi_p \overset{\curlywedge}{\otimes}_{\T_{\Sigma,\m}} \Pi_{\Sigma_0}\overset{\sim}{\longrightarrow} \widehat{H}^0(K^{\Sigma})_{\mc{O},\m}.
$$
(Here $\overset{\curlywedge}{\otimes}_{\T_{\Sigma,\m}}$ denotes Emerton's completed tensor product -- see section 3 below.)
\end{thm}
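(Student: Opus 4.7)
The plan is to produce the isomorphism in two steps, separating the local factor at $p$ from that at $\Sigma_0$. The first step invokes the main theorem of \cite{cs}, which yields (for $\bar{\rho}$ satisfying our hypotheses) a $G(\Q_p) \times \T_{\Sigma,\m}$-equivariant identification
$$
\widehat{H}^0(K^{\Sigma})_{\mc{O},\m} \;\simeq\; \Pi_p \overset{\curlywedge}{\otimes}_{\T_{\Sigma,\m}} M,
$$
where $M$ is the ``multiplicity space'' obtained by evaluating the inverse of Colmez's functor against $\widehat{H}^0(K^{\Sigma})_{\mc{O},\m}$; it carries a smooth admissible action of $U(F_{\Sigma_0})$ commuting with $\T_{\Sigma,\m}$. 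The theorem thus reduces to producing a $\T_{\Sigma,\m}[U(F_{\Sigma_0})]$-equivariant isomorphism $M \simeq \Pi_{\Sigma_0}$.

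For this second step, I would invoke the universal characterization of local Langlands in families from \cite{eh}, \cite{He}: the cotorsion-free module $\Pi_{\Sigma_0}$ is essentially uniquely determined by its smooth admissibility, its $\T_{\Sigma,\m}$-action, and the requirement that at every classical (Fontaine--Laffaille) height-one prime $\p \subset \T_{\Sigma,\m}$ of characteristic zero, the specialization recovers the classical local Langlands representation attached to $\rho_{\m,\vv}|_{\Gamma_{\K_{\vv}}} \otimes \kappa(\p)$ for each $v \in \Sigma_0$. To verify these properties for $M$: smoothness and admissibility are inherited from $\widehat{H}^0$; cotorsion-freeness (of the smooth dual $M^\vee$) descends from the $\varpi$-torsion-freeness of $\widehat{H}^0(K^{\Sigma})_{\mc{O},\m}$ (a standard feature of completed cohomology in the compact-at-infinity setting) together with the appropriate flatness of $\Pi_p$; and the classical specialization property is precisely classical local-global compatibility for automorphic representations on $U$, which at each classical point delivers $\otimes_{v \in \Sigma_0} \pi(\rho_{\m,\vv}|_{\Gamma_{\K_{\vv}}} \otimes \kappa(\p))$.

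The principal obstacle I anticipate is not the individual verifications above but the passage from fiberwise/classical-point data to an integral isomorphism of $\T_{\Sigma,\m}$-modules; one needs sufficient density of classical Fontaine--Laffaille points in $\Spec \T_{\Sigma,\m}$ to apply the uniqueness clause of Emerton--Helm, and one needs control of the $(-)_{ctf}$ operation in families in order to commute it with the tensor product over $\Sigma_0$. This is where the hypotheses on $\bar{\rho}$ (bigness of the image, irreducibility at places above $p$, hyperspecial level outside $\Sigma$, and Fontaine--Laffaille range) enter: through an $R = \T$ theorem in the style of \cite{CHT}, they force $\T_{\Sigma,\m}$ to be the universal deformation ring for $\bar{\rho}$, in which the classical modular points are Zariski-dense. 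Once density is secured, the Emerton--Helm characterization identifies $M$ with $\Pi_{\Sigma_0}$ on the nose, and combining this with the first step yields the desired $G(\Q_p) \times U(F_{\Sigma_0})$-equivariant isomorphism.
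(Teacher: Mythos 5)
Your step 1 is where the argument breaks down: the factorization $\widehat{H}^0(K^{\Sigma})_{\mc{O},\m}\simeq \Pi_p \overset{\curlywedge}{\otimes}_{\T_{\Sigma,\m}} M$ is not something that can be quoted from \cite{cs}. The main theorem there is \emph{weak} local-global compatibility (information at classical specializations / nonvanishing of equivariant maps); the completed-tensor factorization with the multiplicity space $M=X:=\Hom_{\T_{\Sigma,\m}[G(\Q_p)]}(\Pi_p,\widehat{H}^0_{\mc{O},\Sigma,\m})$ is precisely the content of the theorem you are asked to prove, so invoking it as a first step is circular. The actual work is to show that the evaluation map $ev$ on (the maximal cotorsion-free coadmissible submodule) $X_{ctf}$ is an isomorphism: surjectivity after inverting $p$ uses Zariski density of crystalline (allowable) points together with the fact that maps of $\varpi$-adically admissible $G(\Q_p)$-representations have closed image, and injectivity is checked modulo $\varpi$ on $\m$-torsion via Emerton's socle lemma, using that $\bar{\pi}_p$ is irreducible because $\bar{\rho}|_{\Gamma_{\K_{\vv}}}$ is irreducible for $v|p$. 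None of this is addressed in your proposal. Relatedly, cotorsion-freeness of $M$ over $\T_{\Sigma,\m}$ does not ``descend'' from $\varpi$-torsion-freeness of $\widehat{H}^0$ plus flatness of $\Pi_p$ (cotorsion-free means the smooth dual has no $\T_{\Sigma,\m}$-torsion, not merely no $\mc{O}$-torsion); one must pass to $X_{ctf}$ and then prove separately that this passage does not change the image of the evaluation map, which is the point of the Appendix-C arguments (Propositions C.34--C.52 of \cite{em1}) organized in Propositions 1, 3 and 4 of the paper.

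Your step 2 also misstates the uniqueness criterion of \cite{eh}, \cite{He}: $\Pi_{\Sigma_0}$ is \emph{not} characterized by cotorsion-freeness plus correct specializations at a Zariski-dense set of classical points alone. The characterization has a third, essential condition, namely that $(\Pi_{\Sigma_0}/\varpi\Pi_{\Sigma_0})[\m]$ be essentially AIG, and verifying this for the candidate, i.e.\ for $(X/\varpi X)[\m]\simeq \Hom_{k[G(\Q_p)]}(\bar{\pi}_p, H^0_{k,\Sigma,\m}[\m])$, is exactly the step the paper identifies as the genuinely new difficulty in the unitary setting: it is deduced from the essentially AIG property of the space of mod $p$ algebraic modular forms $S_V(G(\Z_p)K^{\Sigma},k)_{\m}[\m]$, which rests on Ihara's lemma for $U(2)$ (via \cite{sor2}) and on the Serre-weight result of \cite{GLS} identifying $W^{gl}(\bar{\rho})$ with the $G(\Z_p)$-socle of $\bar{\pi}_p$. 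Without this condition the fiberwise data at classical points cannot be glued to an integral identification $M\simeq\Pi_{\Sigma_0}$, and no appeal to an $R=\T$ theorem fixes that; the density of allowable points needed here is the one established in \cite{cs} and is used both for the surjectivity of $ev$ and for matching $X_{ctf}$ with $\Pi_{\Sigma_0}$, but it is not a substitute for the essentially AIG input.
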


\noindent This result is formally similar to the (main) local-global compatibility conjecture 6.1.6 in \cite{em1} for $\hat{H}^1$ of the tower of modular curves -- which Emerton proves for $p$-distinguished $\bar{\rho}$ in Theorem 6.2.13 of loc. cit. (i.e., for $\bar{\rho}$ such that $\bar{\rho}|_{\Gamma_{\Q_p}}^{ss}$ is not of the form $\chi\oplus \chi$). 

\medskip

\noindent In our setup, the arithmetic manifolds are finite sets, and we only have cohomology in degree zero. Furthermore, $\hat{H}^0$ is much simpler -- it can be realized as continuous functions on a compact set, and there is no Galois-action. That being said, our approach relies {\it{heavily}} on that of Emerton -- and for the most part our proof is an almost "formal" application of his general results on coadmissible modules in his Appendix C. However, there seems to be one step in \cite{em1} which does not straight-forwardly adapt to the unitary setting. Namely, how Ihara's lemma ensures that a certain candidate representation for $(\Pi_{\Sigma_0}/\varpi \Pi_{\Sigma_0})[\m]$ is indeed {\it{essentially AIG}} (a key notion from \cite{eh}). In the companion paper \cite{sor2}, this was taken care of -- even for $U(n)$, where Ihara's "lemma" is still a big open problem for $n>2$. This led to the writing of this note, which we hope will be of some interest -- and of some expository value.

\medskip

\noindent Note that in the special case where $\Sigma_0=\varnothing$, Theorem 1 realizes $\Pi_p$ globally as
$\widehat{H}^0(K^p)_{\mc{O},\m}$, with $K^p$ being a product of hyperspecials away from $p$. This suggests that the elusive $p$-adic local Langlands correspondence for 
$n$-dimensional $\rho$ should be closely related to the analogous piece of completed cohomology for $U(n)$ -- at least when $\bar{\rho}$ satisfies the criteria given earlier.

\section{Notation and recollections}

\subsection{Definite unitary groups and their Hecke algebras}

\noindent We briefly recall the setting from our paper \cite{cs}. 

\medskip

\noindent The unitary groups we work with are defined relative to a CM-extension $\K/F$. More precisely, we let $D$ be a quaternion algebra over $\K$, endowed with an $F$-linear anti-involution $\star$ of the second kind ($\star|_{\K}=c$). This pair defines a unitary group $U=U(D,\star)_{/F}$, an inner form of $\GL(2)$ over $\K$. Indeed, $U \times_F \K \simeq D^{\times}$. We consider the restriction of scalars, $G=\text{Res}_{F/\Q}(U)$. We always assume $G(\R)$ is {\it{compact}} and that $D$ splits at all places of $\K$ which lie above a fixed prime number $p$, which splits in $\K$.

\medskip

\noindent For each place $v|p$ of $F$, we choose a place $\vv|v$ of $\K$ above it (note that $\K_{\vv}=\Q_p$ canonically). Using this selection of places $\{\vv\}$, we have identifications
$$
\text{$G(\R)\overset{\sim}{\longrightarrow} U(2)^{\Hom(F,\R)}$, $\y$ $G(\Q_p) \overset{\sim}{\longrightarrow} \prod_{v|p} \GL_2(\K_{\vv})$.}
$$
(Of course, $\K_{\vv}=\Q_p$, but we wish to incorporate $\vv$ in our notation to emphasize how our identification depends on this choice. Hence we stick to the somewhat cumbersome notation $\K_{\vv}$.)

%Let $N$ be a positive integer which is prime to p. Let $G$ be a connected reductive algebraic group over $\mc{O} _{F}[1/N]$ satisfying the following conditions

% (1) there is an isomorphism $\iota: G \times _{\mc{O} _{F}[1/N]} \mc{O} _{\K}[1/N] \simeq \GL_{2 / \mc{O} _{\K}[1/N]}$.

% (2) $G \times _{\mc{O} _{F}[1/N]} F$ is an outer form of $\GL _{2 / F}$.

% (3) $G \times _{\mc{O} _{F}[1/N]} F$ is quasi-split at all finite places of $F$.

% (4) $G \times _{\mc{O} _{F}[1/N]} F$ is isomorphic to $U(2)(\mathbb{R})$ at all infinite places of $F$

% Observe that such groups exist (see for example Section 7.1.1 in \cite{egh}). From condition (1) we get for any finite place $v$ in $F$ which splits in $\K$ as $ww^c$ (where $c$ is a generator of $\Gal(\K /F)$) an isomorphism $\iota _w : G(F _{v}) \simeq \GL _2(\K _w)$ which restricts to an isomorphism which we denote by the same letter $\iota _w : G(\mc{O} _{F _v}) \simeq \GL _2(\mc{O} _{\K _{w}})$ when $v$ does not divide $N$. 

\medskip

\noindent Throughout, we fix a finite set $\Sigma _0$ of finite places of $F$ -- none of which lie above $p$. We assume each $v \in \Sigma_0$ splits in $\K$, and that $D$ splits at every place above such a $v$. Let $\Sigma = \Sigma _0 \cup \Sigma_p$, where $\Sigma_p=\{ v|p \}$ consists of all places above $p$. We enlarge our selection $\{\vv\}$, and choose a divisor $\vv$ of each $v \in \Sigma$.

\medskip

\noindent For any compact open subgroup $K \subset G(\mb{A} _f)$ we consider the arithmetic manifold 
$$
Y(K) = G(\Q) \backslash G(\mb{A}_f) / K
$$
which is just a finite set. For any commutative ring $A$ we denote by $H^0(K)_A$ the set of functions $Y(K) \rightarrow A$. For each tame level $K^p \subset G(\mb{A}_f^p)$, one defines (following Emerton) the {\it{completed}} cohomology, with coefficients in some $p$-adic ring of integers $\mc{O}=\mc{O}_E$, having uniformizer $\varpi$,
$$
\hat{H} ^0(K^p) _{\mc{O}} = \varprojlim _{s} \varinjlim _{K_p} H^0(K_pK^p) _{\mc{O} / \varpi ^s \mc{O}},
$$
where the direct limit runs over compact open subgroups $K_p \subset G(\Q _p)$, and $s>0$. This is the unit ball in the $p$-adic Banach space 
$\hat{H} ^0(K^p) _{E}$, on which $U(F_{\Sigma_p})\simeq \prod_{v|p}\GL_2(\K_{\vv})$ acts unitarily. Next, we factor $K^p=K_{\Sigma_0}K^{\Sigma}$ and let $K_{\Sigma_0}$ shrink to the identity. We define
$$
\hat{H} ^0(K^{\Sigma}) _{\mc{O}}= \hat{H} ^0 _{\mc{O}, \Sigma} = \varinjlim _{K_{\Sigma_0}} H^0(K_{\Sigma_0}K^{\Sigma}) _{\mc{O}},
$$
which by definition carries a smooth $U(F_{\Sigma_0})$-action. In what follows, we will always take $K^{\Sigma}$ to be a product of hyperspecial maximal compact subgroups $K_v=U(\mc{O}_v)$, for $v \notin \Sigma$. (Note that there are {\it{two}} conjugacy classes of such when $U_{/F_v}$ is a $p$-adic unitary group.)

\medskip

\noindent We denote by $\mb{T} _{\Sigma}^{abs}$ the polynomial $\mc{O}$-algebra in the variables $T_w ^{(1)}$ and $T_w ^{(2)}$, one for each place $w$ of $\K$ lying above $v=w|_F\notin \Sigma$, which splits in $\K$. Here the superscript {\it{abs}} stands for "abstract" to remind us that this is a Hecke algebra of infinite type. Once and for all, we choose an isomorphism $\iota_w: U(F_v) \overset{\sim}{\longrightarrow} \GL_2(\K_w)$ in such a way that it identifies $U(\mc{O}_v)\simeq \GL_2(\mc{O}_{\K_w})$.

\medskip

\noindent The algebra $\mb{T} _{\Sigma}^{abs}$ acts naturally on the module $\hat{H}_{\mc{O},\Sigma}^0$. Explicitly, $T_w^{(j)}$ acts via the usual double coset operator
$$
\iota _w ^{-1} \left[ \GL _2 (\mc{O} _{\K_w}) \begin{pmatrix} 1 _{2-j} & \ \\ \ & \varpi _w 1 _j \end{pmatrix} \GL_2(\mc{O} _{\K_w}) \right],
$$
where $\varpi _w$ is a choice of uniformizer in $\mc{O} _{\K_w}$. We let $\mb{T}_{\Sigma}$ denote the quotient of $\mb{T} _{\Sigma}^{abs}$ which acts faithfully on 
$\hat{H}_{\mc{O},\Sigma}^0$, and we may (tacitly) view $T_w^{(j)}$ as an operator on $\hat{H}_{\mc{O},\Sigma}^0$ or a variable in $\mb{T} _{\Sigma}^{abs}$, interchangeably.
For a compact open subgroup $K_{\Sigma_0}$ as above (a "level") we let $\mb{T}_{\Sigma}(K_{\Sigma_0})$ be the quotient of $\mb{T}_{\Sigma}$ cut out by 
the submodule of $K_{\Sigma_0}$-invariants, $\hat{H}^0(K_{\Sigma_0}K^{\Sigma}) _{\mc{O}}$.

\medskip

\noindent For each maximal ideal $\m \subset \mb{T} _{\Sigma}^{abs}$, say with residue field $k=\mb{T} _{\Sigma}^{abs}/\m$, which is a finite extension of $k_E\supset \F_p$, 
we may consider the localization $\hat{H} ^0(K^{\Sigma}) _{\mc{O},\m}$, which will be our main object of study. Those maximal ideals we will eventually look at arise from a "modular" Galois representation, $\bar{\rho} : \Gamma _{\K} \rightarrow \GL_2 (k_E)$, which is unramified outside $\Sigma$, as follows: $\m=\m_{\bar{\rho}} \subset \mb{T} _{\Sigma}^{abs}$ is generated by $\varpi=\varpi_E$ and all the elements
$$ 
((-1)^j \Norm (w) ^{j(j-1)/2} T_w ^{(j)} - a_w ^{(j)}) _{j,w}
$$
where $j\in \{1,2\}$, and $w$ ranges over places of $\K$ lying above split $v \notin \Sigma$. Here the $a_w^{(j)}\in \mc{O}$ are lifts of $\bar{a}_w^{(j)}$, where
$X^2 + \bar{a} _w ^{(1)} X + \bar{a}_w ^{(2)}$ is the characteristic polynomial of $\bar{\rho}(\Frob _w)$. We will occasionally write $\bar{\rho}=\bar{\rho}_{\m}$.

%----------------------------------------------------------

\subsection{Essentially AIG representations}

Let $L/\Q_{\ell}$ be a finite extension, for some prime $\ell\neq p$. Eventually this $L$ will be one of the completions $F_v \simeq \K_{\tilde{v}}$, for $v \in \Sigma_0$. 
We say (after 3.2.1 in \cite{eh}) that a representation $V$ of $\GL_2(L)$, with coefficients in $E$ or $k_E$, is {\it{essentially AIG}} (which stands for absolutely irreducible and generic) if the conditions below are fulfilled -- we stress that {\it{generic}} is synonymous with {\it{infinite-dimensional}} here.

\begin{itemize}
\item[(1)] The $\GL_2(L)$-socle $\soc(V)$ is absolutely irreducible and generic.
\item[(2)] The quotient $V / \soc(V)$ contains no generic Jordan-Holder factors.
\item[(3)] The representation $V$ is the sum (or equivalently, the union) of its finite length submodules.
\end{itemize}

\noindent We remark that if $V$ is essentially AIG, then so is any non-zero subrepresentation $U \subset V$. Moreover, $\soc(U) = \soc(V)$. We will use this fact later on.

\medskip

\noindent We will use the term "essentially AIG" also for representations of $\prod _i \GL_2(L_i)$, for a finite collection of fields $L_i / \Q _{\ell_i}$ (possibly of varying residue characteristics $\ell_i$), by which we mean a tensor product $\otimes_i V_i$ of essentially AIG representations $V_i$ of each $\GL_2(L_i)$. The context below will be that of representations of $U(F_{\Sigma_0})$, which we always identify with $\prod_{v\in \Sigma_0}\GL_2(\K_{\tilde{v}})$ via the selection of places $\{\vv\}_{v \in \Sigma}$.

\subsection{The local Langlands correspondence in families}

\noindent One of the main theorems of \cite{eh} is a characterization of the local Langlands correspondence for $\GL(n)$ in families. The proof that it actually does exist (at least in the banal case, where $p$ is prime to the pro-order of the groups in question) is given in the recent work \cite{He}. Local Langlands in families puts the classical local Langlands correspondence in a family over a reduced complete Noetherian local $\mc{O}$-algebra $A$. Emerton and Helm give a short list of desiderata, which we verify for a candidate-representation occuring naturally in the course of the proof of our main result. For convenience, we recall the criteria here.

\begin{thm}[The local Langlands correspondence in families]
Let $A$ be a reduced complete Noetherian local $\mc{O}$-algebra.
Let $\Sigma _0$ be the set of places of $F$ from above. Suppose that for each $v \in \Sigma _0$ we are given a representation $\rho _{\vv} : \Gamma _{\K_{\vv}} \rightarrow \GL_2(A)$.  Then there is at most one (up to isomorphism) coadmissible smooth $A$-representation $\Pi _{\Sigma _0}=\pi(\{\rho_{\vv}\}_{v \in \Sigma_0})$ of $U(F_{\Sigma_0})\simeq \prod_{v\in \Sigma_0}\GL_2(\K_{\tilde{v}})$ such that:

\begin{itemize}
\item[(1)] $\Pi _{\Sigma _0}$ is cotorsion free over $A$ (that is, the smooth dual is torsion free -- which in turn means nonzero elements can only be annihilated by zero-divisors).
\item[(2)] $(\Pi _{\Sigma _0} / \varpi \Pi _{\Sigma _0})[\mathfrak{m}]$ is an essentially AIG representation (over $k=A/\m$).
\item[(3)] There is a Zariski dense subset of closed points $\mc{S} \subset \Spec A[1/p]$, such that for each point $\mathfrak{p} \in \mc{S}$, there exists an isomorphism:
$$ 
(\Pi _{\Sigma _0}\otimes_{\mc{O}} E)[\p]
\simeq \otimes _{v \in \Sigma _0} \pi(\rho_{\vv} \otimes_A \kappa (\mathfrak{p}))
$$
where $\kappa (\mf{p})$ is the fraction field of $A / \mf{p}$, and $\pi(\rho_{\vv} \otimes_A \kappa (\mathfrak{p}))$ is the representation of $\GL_2(\K_{\vv})$ associated to 
$\rho_{\vv}(\p):=\rho_{\vv} \otimes_A \kappa (\mathfrak{p})$ via the generic local Langlands correspondence (of Breuil-Schneider \cite{BS}).
\end{itemize}

\end{thm}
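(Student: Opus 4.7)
\medskip

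\noindent\textbf{Proof proposal.} The plan is to prove uniqueness by first pinning down the mod $(\varpi, \m)$ reduction of any candidate up to canonical isomorphism, then constructing a morphism between two candidates that lifts a socle identification, and finally upgrading it to an isomorphism using torsion-freeness together with Zariski density. Throughout I will pass freely between $\Pi_{\Sigma_0}$ and its smooth dual, trading \emph{cotorsion-free} for \emph{torsion-free} in condition (1).

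The first step is to determine $(\Pi_{\Sigma_0}/\varpi\Pi_{\Sigma_0})[\m]$ up to isomorphism. By (2) this representation is essentially AIG, hence determined by its socle and by the way it is assembled from finite length submodules at each place $v \in \Sigma_0$. Condition (3) at the dense subset $\mc{S}$ pins down $\pi(\rho_{\vv}(\p))$ for every $\p \in \mc{S}$; its unique generic Jordan--H\"older factor (in particular its socle) is dictated by the residual Galois data $\bar{\rho}_{\vv}$ via mod $p$ semisimplified local Langlands, and Zariski density in $\Spec A[1/p]$ propagates this identification to the closed fiber. Given two candidates $\Pi, \Pi'$, this procedure yields a preferred isomorphism of socles on their mod $(\varpi, \m)$ reductions. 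I would next lift this socle identification to a $U(F_{\Sigma_0})$-equivariant $A$-linear map $\iota: \Pi \to \Pi'$ by invoking the formalism of projective envelopes in the category of admissible (cotorsion-free) smooth $A$-representations developed in \cite{eh} and Appendix C of \cite{em1}. Dually, the projective envelope of the socle is a universal object, and both $\Pi^\vee, \Pi'^\vee$ arise as canonical quotients cut out by the family $\{\rho_{\vv}\}_{v \in \Sigma_0}$; the comparison map is the induced $\iota$.

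The main obstacle is to verify that $\iota$ is an isomorphism. For injectivity, $\ker(\iota) \subset \Pi$ is torsion-free over $A$ and vanishes at every $\p \in \mc{S}$ by (3), so Zariski density of $\mc{S}$ in $\Spec A[1/p]$ together with torsion-freeness forces $\ker(\iota) = 0$. For surjectivity, $\cok(\iota)$ is a cotorsion-free quotient whose mod $(\varpi, \m)$ reduction has trivial socle by the construction of $\iota$. Since an essentially AIG representation admits no nonzero quotient whose socle is disjoint from the original generic socle, this forces $\cok(\iota)/(\varpi, \m)\cok(\iota) = 0$, and a Nakayama argument inside the coadmissible category yields $\cok(\iota) = 0$. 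The delicate point -- and the genuine technical obstacle -- is keeping track of the coadmissibility and topological $A$-module structure throughout these categorical manipulations (socle, envelope, Nakayama), which is precisely what makes the apparatus of Emerton's Appendix C indispensable.
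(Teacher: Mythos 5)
Your proposal sets out to reprove the uniqueness statement from scratch, whereas the paper's own proof is simply a citation: the theorem is Proposition 6.3.14 of \cite{eh}, transported to the present (cotorsion-free) normalization by dualizing via Proposition C.5 and Lemma C.35 of \cite{em1}. Measured as an independent argument, your sketch has a genuine gap at its heart, namely the construction of the comparison map $\iota:\Pi\rightarrow\Pi'$. You obtain $\iota$ by asserting that both $\Pi^{\vee}$ and $\Pi'^{\vee}$ ``arise as canonical quotients of the projective envelope of the socle cut out by the family $\{\rho_{\vv}\}$.'' No such projective-envelope formalism for smooth $A[\GL_2(L)]$-representations ($\ell\neq p$) is developed in \cite{eh} or in Appendix C of \cite{em1} (the latter is a theory of coadmissible and cotorsion-free modules, not of projective objects in these representation categories); more importantly, knowing that every candidate is a canonical quotient of a universal object determined by the Galois family is essentially the \emph{existence} theorem (Helm's Bernstein-center construction \cite{He}), and assuming it in a uniqueness proof begs the question. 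A uniqueness argument may only use conditions (1)--(3), and those conditions by themselves do not hand you any map between two abstract candidates; producing one is exactly the nontrivial content of the Emerton--Helm argument being cited.

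Several of the supporting steps are also incorrect or conflate the two dualities. An essentially AIG representation is \emph{not} determined by its socle (non-isomorphic essentially AIG representations share a socle -- that is precisely why the family structure carries information), so your first step does not pin down $(\Pi/\varpi\Pi)[\m]$, and ``Zariski density propagates the identification to the closed fiber'' is not an argument: the closed fiber is an output of the uniqueness theorem, not an input. In the surjectivity step, the claim that an essentially AIG representation admits no nonzero quotient whose socle misses the generic socle is false -- $V/\soc(V)$ is such a quotient whenever $V\neq\soc(V)$ -- and the assertion that $\cok(\iota)$ reduces to something with trivial socle is unjustified given the non-exactness of $(-)/\varpi$ and $[\m]$. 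For injectivity, $\ker(\iota)$ is a submodule of a \emph{cotorsion-free} module, which need not be torsion-free or cotorsion-free, and the fibers $\otimes_{v}\pi(\rho_{\vv}(\p))$ under the generic (Breuil--Schneider) correspondence need not be irreducible, so (3) alone does not force $\ker(\iota)$ to vanish at $\p\in\mc{S}$; these points are exactly where one must dualize carefully (Proposition C.5, Lemma C.35 of \cite{em1}) rather than pass ``freely'' between $\Pi$ and $\Pi^{\vee}$. The efficient and intended route is the paper's: quote the uniqueness statement of \cite{eh} and dualize.
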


\begin{proof}
This is the content of Proposition 6.3.14 in \cite{eh}, except that we have dualized everything -- using Proposition C.5 and Lemma C.35 in \cite{em1}
\end{proof}

\noindent In this article we specialize the theorem to the following situation: We take $A=\T_{\Sigma,\m}$, for some maximal ideal $\m=\m_{\bar{\rho}} \subset \T_{\Sigma}$,
and the Galois input is $\rho_{\vv}=\rho_{\m,\vv}=\rho_{\m}|_{\Gamma_{\K_{\vv}}}$, where $\rho_{\m}: \Gamma_{\K}\rightarrow \GL_2(\T_{\Sigma,\m})$ is the universal modular $\Sigma$-deformation of $\bar{\rho}$ over the "big" Hecke algebra $\T_{\Sigma,\m}$. We write $\Pi _{\Sigma _0}$ for the resulting representation. For any $\mf{p} \in \Spec \mb{T} _{\Sigma, \mf{m}}$ we will write 
$$
\Pi _{\Sigma _0}(\mf{p}) = \kappa (\mf{p}) \otimes _{\mb{T} _{\Sigma, \mf{m}}} \Pi _{\Sigma _0}.
$$
Similarly, for $v \in \Sigma_p$, let $\Pi_v$ be the $\T_{\Sigma,\m}$-representation of $\GL_2(\K_{\vv})$ associated with $\rho_{\m}|_{\Gamma_{\K_{\vv}}}$
under the $p$-adic local Langlands correspondence for $\GL_2(\Q_p)$ (of Berger, Breuil, Colmez, Emerton, Kisin, Paskunas and others). Keep in mind that $\K_{\vv}=\Q_p$, so this is well-defined. Let $\Pi_{p}=\otimes_{v|p}\Pi_v$ -- viewed as a $\T_{\Sigma,\m}$-representation of $G(\Q_p)\simeq \prod_{v|p}\GL_2(\K_{\vv})$. As above, we may specialize,
$$
\Pi _p (\mf{p}) = \kappa (\mf{p}) \otimes _{\mb{T} _{\Sigma, \mf{m}}} \Pi _p.
$$
When $\p=\m$, we will write $\bar{\pi}_p$ instead of $\Pi_p(\m)=\Pi_p/\m \Pi_p$ (a smooth $G(\mb{Q}_p)$-representation over $k$). 

%------------------------------------

\section{The strategy of the proof of our main result}

The heart of the overall argument is to relate $\Pi_{\Sigma_0}$ to the module
$$
X := \Hom _{\mathbb{T}_{\Sigma, \mathfrak{m}}[G(\Q _p)]} ( \Pi _p, \widehat{H} ^0 _{\mc{O}, \Sigma, \m}),
$$
of $\mathbb{T} _{\Sigma, \mathfrak{m}}[G(\Q_p)]$-linear continuous homomorphisms $\Pi _p \rightarrow \widehat{H} ^0 _{\mc{O}, \Sigma, \m}$. Here
$\Pi _p$ is given the $\mathfrak{m}$-adic topology, and $\widehat{H} ^{0} _{\mc{O}, \Sigma, \m}$ is given the induced topology from $\widehat{H} ^{0} _{\mathcal{O}, \Sigma}$.
Note that $X$ is naturally a $\mathbb{T}_{\Sigma, \mathfrak{m}}$-module, with a smooth action of $U(F_{\Sigma_0})$, and there is a natural evaluation-map 
$ev_X: \Pi_p \overset{\curlywedge}{\otimes}_{\mathbb{T}_{\Sigma, \mathfrak{m}}}X \rightarrow  \widehat{H} ^0 _{\mc{O}, \Sigma, \m}$, which is equivariant for the $U(F_{\Sigma})$-action. The curly-wedge tensor product $\overset{\curlywedge}{\otimes}_{\mathbb{T}_{\Sigma, \mathfrak{m}}}$ is Emerton's notation (see Definition C.43 in \cite{em1}); it is the direct limit of $\varpi$-adically completed tensor products,
$$
\Pi_p \overset{\curlywedge}{\otimes}_{\mathbb{T}_{\Sigma, \mathfrak{m}}}X:= \varinjlim_{K_{\Sigma_0}}
\Pi_p \hat{\otimes}_{\mathbb{T}_{\Sigma, \mathfrak{m}}}X^{K_{\Sigma_0}},
$$
over compact open subgroups $K_{\Sigma_0}\subset U(F_{\Sigma_0})$.

\medskip

\noindent For every coadmissible $\mathbb{T}  _{\Sigma, \m} [U(F_{\Sigma _0})]$-submodule $Y\subset X$, we let $ev_Y$ denote the restriction of $ev_X$,
$$
ev_Y: \Pi _p \overset{\curlywedge}{\otimes} _{\mathbb{T}  _{\Sigma, \m}} Y \hookrightarrow  \Pi _p \overset{\curlywedge}{\otimes} _{\mathbb{T}  _{\Sigma, \m}} X \longrightarrow  \widehat{H} ^{0} _{\mathcal{O}, \Sigma, \m}.
$$
(The fact that the initial map is an embedding is Lemma C.48 in \cite{em1}.) In what follows, we will run into the map $ev_{Y,E}$ into $\widehat{H} ^{0} _{E, \Sigma, \m}$, obtained by tensoring $(-)\otimes_{\mc{O}}E$. For each prime $\p \subset \mathbb{T}  _{\Sigma, \m}[1/p]$, we take the $\p$-torsion and look at the induced map
$$
ev_{Y,E}[\p]: \Pi_p(\p)\otimes_{\kappa(\p)} (Y\otimes_{\mc{O}}E)[\p] \longrightarrow \widehat{H} ^{0} _{E, \Sigma, \m}[\p].
$$
On the other hand, we may first reduce $ev_Y$ modulo $\varpi$, and then take the $\m$-torsion. By Lemma C.45 in \cite{em1} this gives rise to a map
$$
ev_{Y,k}[\m]: \bar{\pi}_p\otimes_k (Y/\varpi Y)[\m] \longrightarrow H^{0} _{k, \Sigma, \m}[\m],
$$ 
where $H_{k,\Sigma}^0$ is the space of all $k$-valued $K^{\Sigma}$-invariant functions on $G(\Q)\backslash G(\A_f)$.

\medskip

\noindent In view of these definitions, our main result can be restated as saying there exists a coadmissible $\mathbb{T}  _{\Sigma, \m} [U(F_{\Sigma_0})]$-submodule $Y\subset X$ such that $ev _Y$ is an isomorphism and $Y \simeq \Pi _{\Sigma _0}$. To carry out this line of thought, we need two results whose proofs are given in 
next sections, and which form the technical core of the argument. We recall that $X_{ctf}\subset X$ denotes the maximal cotorsion-free coadmissible submodule of $X$.

\begin{prop}
Let $Y\subset X$ be a coadmissible $\mathbb{T}  _{\Sigma , \mf{m}} [U(F_{\Sigma_0})]$-module. Then the following two conditions are equivalent:

\begin{itemize}
\item[(1)] The map $ev_Y$ is an isomorphism.
\item[(2)] $Y$ is a faithful $\mathbb{T}  _{\Sigma, \mf{m}}$-module, and the map $ev_{Y,k}[\m]$ is injective.
\end{itemize}

\noindent Moreover, if $Y$ satisfies these conditions, then $Y = X_{ctf}$.
\end{prop}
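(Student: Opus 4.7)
The plan is to apply Emerton's coadmissibility formalism from Appendix C of \cite{em1}, using the local $p$-adic inputs that $\bar{\pi}_p$ is absolutely irreducible (supersingular), $\Pi_p$ is faithful over $\T_{\Sigma,\m}$, and $\End_{\T_{\Sigma,\m}[G(\Q_p)]}(\Pi_p) = \T_{\Sigma,\m}$ --- all consequences (via Paskunas) of the hypothesis that each $\bar{\rho}|_{\Gamma_{\K_{\vv}}}$ is irreducible.

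The implication $(1)\Rightarrow(2)$ is essentially formal. If $ev_Y$ is an isomorphism, then any $t \in \T_{\Sigma,\m}$ annihilating $Y$ kills $\Pi_p \overset{\curlywedge}{\otimes}_{\T_{\Sigma,\m}} Y \simeq \widehat{H}^0_{\mc{O},\Sigma,\m}$, forcing $t = 0$ since $\widehat{H}^0_{\mc{O},\Sigma,\m}$ is faithful over $\T_{\Sigma,\m}$ by construction. Reducing the iso $ev_Y$ modulo $\varpi$ and extracting $\m$-torsion, Lemma C.45 of \cite{em1} identifies the source with $\bar{\pi}_p \otimes_k (Y/\varpi Y)[\m]$, so $ev_{Y,k}[\m]$ is again an iso and in particular injective.

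For the substantive direction $(2)\Rightarrow(1)$ I would argue in two parts. \emph{Injectivity of $ev_Y$}: the kernel $K \subset \Pi_p \overset{\curlywedge}{\otimes} Y$ is a coadmissible $\T_{\Sigma,\m}[U(F_\Sigma)]$-submodule; by $\varpi$-adic separatedness and existence of non-trivial socles in nonzero coadmissible modules over $\T_{\Sigma,\m}$, it suffices to show $(K/\varpi K)[\m]=0$. Via Lemma C.45, this module injects into $\ker(ev_{Y,k}[\m])$, which is $0$ by (2). \emph{Surjectivity of $ev_Y$}: the cokernel $C$ is a coadmissible $\T_{\Sigma,\m}[U(F_\Sigma)]$-module, and it is enough to verify that $C \otimes_\mc{O} E$ vanishes at a Zariski-dense set of primes $\p \subset \T_{\Sigma,\m}[1/p]$. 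I would take $\p$ running over classical automorphic points, which are dense by standard deformation-theoretic results. At any such $\p$, classical multiplicity one for our definite unitary group combined with the compatibility of the $p$-adic and classical local Langlands correspondences yields an irreducible joint $G(\Q_p) \times U(F_{\Sigma_0})$-decomposition
$$
\widehat{H}^0_{E,\Sigma,\m}[\p] \simeq \Pi_p(\p) \otimes_{\kappa(\p)} \bigotimes_{v \in \Sigma_0} \pi(\rho_{\m,\vv}(\p)),
$$
while faithfulness of $Y$ guarantees $(Y \otimes_\mc{O} E)[\p]\neq 0$. Thus $ev_{Y,E}[\p]$ is a non-zero map into the irreducible target, hence surjective; and $C = 0$ follows.

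For the \emph{moreover} assertion, any $Y$ satisfying (1) is automatically cotorsion-free (since $\widehat{H}^0_{\mc{O},\Sigma,\m}$ is $\varpi$-torsion-free and $\Pi_p$ is faithful), so $Y \subset X_{ctf}$. Using $\End_{\T_{\Sigma,\m}[G(\Q_p)]}(\Pi_p) = \T_{\Sigma,\m}$ and the adjunction between $\Pi_p \overset{\curlywedge}{\otimes}(-)$ and $\Hom_{\T_{\Sigma,\m}[G(\Q_p)]}(\Pi_p, -)$, the isomorphism $ev_Y$ identifies $Y$ canonically with $\Hom_{\T_{\Sigma,\m}[G(\Q_p)]}(\Pi_p, \widehat{H}^0_{\mc{O},\Sigma,\m})_{ctf} = X_{ctf}$, compatibly with the inclusion $Y \hookrightarrow X_{ctf}$.

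The main obstacle will be the surjectivity step: cleanly matching $ev_{Y,E}[\p]$ at each classical $\p$ with the Emerton--Helm tensor decomposition requires both automorphic multiplicity one (available for $U$ in two variables) and a careful use of the comparison between $\Pi_p(\p)$ and the $p$-adic Langlands correspondent of $\rho_\m(\p)|_{\Gamma_{\Q_p}}$ on the crystalline locus.
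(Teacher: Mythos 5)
Your direction $(1)\Rightarrow(2)$ matches the paper (modulo the harmless overstatement that $ev_{Y,k}[\m]$ is an isomorphism --- a priori it is only injective, since its source $\bar{\pi}_p\otimes_k(Y/\varpi Y)[\m]$ need not exhaust the $\m$-torsion of the reduction). The genuine gap is in your surjectivity step of $(2)\Rightarrow(1)$. Your criterion --- ``the cokernel $C$ is coadmissible and vanishes at a Zariski-dense set of primes, hence $C=0$'' --- is not valid: for (co)finitely generated modules over $\T_{\Sigma,\m}$, vanishing of fibers or of $\p$-torsion at a Zariski-dense set of closed points does not force vanishing (a skyscraper-type module vanishes at a dense set of points), and moreover passing from surjectivity of $ev_{Y,E}[\p]$ onto the $\p$-eigenspace to $C[\p]=0$ fails because taking $\p$-torsion is only left exact. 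Your pointwise input is also stronger than what is available: at an allowable $\p$ one only controls the \emph{closure of the locally algebraic vectors} $M(\p)_E\subset \widehat{H}^0_{E,\Sigma,\m}[\p]$ (Lemma 2 of the paper), not the full eigenspace, so ``non-zero map into the irreducible target, hence surjective'' does not apply to $\widehat{H}^0_{E,\Sigma,\m}[\p]$ itself. The paper's argument needs two things you omit: (i) first deduce from injectivity of $ev_{Y,k}[\m]$ (Emerton's Lemma C.46, C.52) that $ev_Y$ is injective with \emph{saturated} image and that $Y$ is saturated in $X$ --- saturation is a hypothesis of the faithfulness/fiber-equality equivalences (Proposition 3) that convert faithfulness of $Y$ into $(Y\otimes E)[\p]=(X\otimes E)[\p]$ at allowable points; and (ii) a topological surjectivity argument (Proposition 4): the image of $ev_{Y,E}$ contains $\oplus_{\p}\widehat{H}^0_{E,\Sigma,\m}[\p]_{loc.alg.}$, which is dense by Zariski density of crystalline points, and the image is \emph{closed} because $ev_{Y}(K_{\Sigma_0})$ is a map of $\varpi$-adically admissible $G(\Q_p)$-representations (Emerton's Proposition 3.1.3). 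Density plus closedness gives surjectivity over $E$, and saturation of the image then upgrades this to an integral isomorphism. Without the closed-image step your argument does not close.

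The ``moreover'' part also does not work as written. That an isomorphism $ev_Y$ forces $Y$ to be cotorsion-free over $\T_{\Sigma,\m}$ (torsion-freeness of the smooth dual, not merely $\varpi$-torsion-freeness) is unjustified, and the adjunction step --- that $\End_{\T_{\Sigma,\m}[G(\Q_p)]}(\Pi_p)=\T_{\Sigma,\m}$ identifies $Y$ with $\Hom_{\T_{\Sigma,\m}[G(\Q_p)]}(\Pi_p,\widehat{H}^0_{\mc{O},\Sigma,\m})_{ctf}$ --- requires that the unit map $Y\rightarrow \Hom(\Pi_p,\Pi_p\overset{\curlywedge}{\otimes}Y)$ be an isomorphism for coadmissible $Y$, which is a nontrivial assertion you do not prove. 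The paper instead gets one inclusion $X_{ctf}\subset Y$ from saturation of $Y$ and condition (e) of Proposition 3, observes that $X_{ctf}$ is saturated in $Y$ so $ev_{X_{ctf},k}[\m]$ inherits injectivity, applies the already-proved equivalence to conclude $ev_{X_{ctf}}$ is an isomorphism, and then invokes Emerton's Lemma C.51 to deduce that the inclusion $X_{ctf}\subset Y$ inducing an isomorphism after $\Pi_p\overset{\curlywedge}{\otimes}_{\T_{\Sigma,\m}}(-)$ must be an equality. You should restructure your argument along these lines, or supply proofs of the cotorsion-freeness and full-faithfulness claims you are implicitly using.
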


\noindent We will combine this result with:

\begin{prop}
If $ev _{X _{ctf}}$ is an isomorphism, then $X _{ctf} \simeq \Pi _{\Sigma _0}$.
\end{prop}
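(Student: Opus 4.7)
The plan is to verify that $X_{ctf}$ satisfies the three defining properties of $\Pi_{\Sigma_0}$ listed in Theorem 2 (with $A=\T_{\Sigma,\m}$ and Galois input $\rho_{\m,\vv}$), and to conclude by the uniqueness clause of that theorem. Property (1) is automatic from the construction: $X_{ctf}$ is the maximal cotorsion-free coadmissible $\T_{\Sigma,\m}[U(F_{\Sigma_0})]$-submodule of $X$, and coadmissibility/cotorsion-freeness are inherited.

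For property (3), I would fix a Zariski dense set $\mc{S}$ of classical arithmetic primes $\p \in \Spec \T_{\Sigma,\m}[1/p]$ -- these are dense by the usual density of classical points. Localising the hypothesised isomorphism $ev_{X_{ctf},E}$ at each such $\p$, the right-hand side $\widehat{H}^{0}_{E,\Sigma,\m}[\p]$ is described by classical global Langlands as (the finite part of) the automorphic representation corresponding to $\p$, which factors as a restricted tensor product of its local components. At the places $v\mid p$, the compatibility of $p$-adic and classical local Langlands for $\GL_2(\Q_p)$ identifies $\Pi_p(\p)\simeq \otimes_{v|p}\pi(\rho_{\m,\vv}(\p))$. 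Cancelling this factor inside $ev_{X_{ctf},E}[\p]$ then extracts the required isomorphism $(X_{ctf}\otimes_{\mc{O}}E)[\p]\simeq \otimes_{v\in \Sigma_0}\pi(\rho_{\m,\vv}(\p))$.

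The main obstacle is property (2): that $(X_{ctf}/\varpi X_{ctf})[\m]$ is essentially AIG as a $U(F_{\Sigma_0})$-representation. Reducing $ev_{X_{ctf}}$ modulo $\varpi$ and taking $\m$-torsion, Lemma C.45 of \cite{em1} together with the assumed isomorphism yields
\[
\bar{\pi}_p \otimes_k (X_{ctf}/\varpi X_{ctf})[\m] \;\simeq\; H^{0}_{k,\Sigma,\m}[\m].
\]
Since $\bar{\pi}_p$ corresponds to the absolutely irreducible $\bar{\rho}|_{\Gamma_{\K_{\vv}}}$ under Colmez's functor (and is essentially AIG on the $G(\Q_p)$-side), the essentially-AIG condition for $(X_{ctf}/\varpi X_{ctf})[\m]$ translates into a statement about the $U(F_{\Sigma_0})$-socle structure of the mod-$p$ cohomology $H^{0}_{k,\Sigma,\m}[\m]$: irreducibility and genericity of the socle at the $\Sigma_0$-factors, together with the absence of generic Jordan--H\"older constituents in the quotient. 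This is precisely the place where an Ihara-type input is required; it is the difficulty flagged in the introduction, and is exactly what the companion paper \cite{sor2} is designed to supply.

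With all three criteria in hand, the uniqueness clause of Theorem 2 forces $X_{ctf}\simeq \Pi_{\Sigma_0}$, as desired.
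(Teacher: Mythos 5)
Your overall strategy -- verify the three Emerton--Helm criteria of Theorem 2 for $X_{ctf}$ (with $A=\T_{\Sigma,\m}$ and input $\rho_{\m,\vv}$) and invoke the uniqueness clause -- is exactly the paper's. But your verification of criterion (2) has a genuine gap. First, the claimed isomorphism $\bar{\pi}_p\otimes_k(X_{ctf}/\varpi X_{ctf})[\m]\simeq H^0_{k,\Sigma,\m}[\m]$ does not follow from $ev_{X_{ctf}}$ being an isomorphism: reducing mod $\varpi$ and taking $\m$-torsion only yields that $ev_{X_{ctf},k}[\m]$ is \emph{injective} (the source of that map need not exhaust the $\m$-torsion of the reduced completed tensor product; the paper explicitly notes it ``may not be onto, a priori''). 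More seriously, you stop at ``this is where Ihara-type input is required,'' but the result of \cite{sor2} is a statement about the space of mod $p$ algebraic modular forms $S_V(G(\Z_p)K^{\Sigma},k)_{\m}[\m]$ of a fixed Serre weight $V$, not about $H^0_{k,\Sigma,\m}[\m]$ or about $(X_{ctf}/\varpi X_{ctf})[\m]$; the bridge is the actual content of the proof and is missing from your sketch. The paper's route is: by saturation, $(X_{ctf}/\varpi X_{ctf})[\m]$ embeds into $(X/\varpi X)[\m]\simeq\Hom_{k[G(\Q_p)]}(\bar{\pi}_p,H^0_{k,\Sigma,\m}[\m])$; choosing $V\in W^{gl}(\bar{\rho})$, which by \cite{GLS} lies in the $G(\Z_p)$-socle of $\bar{\pi}_p$ and generates it (irreducibility of $\bar{\pi}_p$), restriction gives an injection of this Hom space into $\Hom_{k[G(\Z_p)]}(V,H^0_{k,\Sigma,\m}[\m])=S_V(G(\Z_p)K^{\Sigma},k)_{\m}[\m]$, which is essentially AIG by \cite{sor2}; one then uses that any nonzero subrepresentation of an essentially AIG representation is essentially AIG.

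Your treatment of criterion (3) is also too loose. You cannot simply ``cancel'' the factor $\Pi_p(\p)$, and the identification of $\widehat{H}^0_{E,\Sigma,\m}[\p]$ with (the finite part of) a classical automorphic representation is incorrect: it is a Banach space whose \emph{locally algebraic} vectors are classical. The paper's Lemma 2 fixes both points, and only at \emph{allowable} (crystalline, regular) closed points: there Berger--Breuil identifies $\Pi_p(\p)$ with the universal unitary completion of the irreducible locally algebraic representation $BS(\p)$, whence $\Hom_{E[G(\Q_p)]}(\Pi_p(\p),\widehat{H}^0_{E,\Sigma,\m}[\p])=\Hom_{E[G(\Q_p)]}(\Pi_p(\p),M(\p)_E)$, and the latter is computed from the classical description of the locally algebraic vectors (multiplicity one, local-global compatibility at $p$ and at $\Sigma_0$), giving $(X\otimes_{\mc{O}}E)[\p]\simeq\otimes_{v\in\Sigma_0}\pi(\rho_{\m,\vv}(\p))$; finally one passes from $X$ to $X_{ctf}$ via Lemma 3 (the closure of $\cup_{\p}X[\p]$ is $X_{ctf}$), rather than by specializing the hypothesized isomorphism $ev_{X_{ctf}}$ at $\p$ -- taking $\p$-torsion of that isomorphism does not obviously give what you claim, since the $\p$-torsion of the completed tensor product is not the tensor product of the $\p$-torsions. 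So the skeleton is right, but the two substantive verifications need the Hom-space arguments and the allowable-point restriction that the paper supplies.
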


\noindent Before proving our main Theorem, granting the above Propositions, let us mention one elementary lemma due to Emerton (Lemma 6.4.15, \cite{em1}). We record the proof for the sake of completeness.

\begin{lem}
Suppose that $\bar{\pi}_1$ and $\bar{\pi}_2$ are smooth representations of $G$ over $k$, $U$ is a vector space over $k$, and $f : \bar{\pi}_1 \otimes _k U \rightarrow \bar{\pi}_2$ is a $G$-equivariant $k$-linear map (the $G$-action on the source is defined via its action on $\bar{\pi}_1$). Assume that $\bar{\pi}_1$ is admissible and the $G$-socle $soc (\bar{\pi}_1)$ of $\bar{\pi}_1$ is multiplicity free. If for every non-zero element $u \in U$, the map $\bar{\pi}_1 \simeq \bar{\pi}_1 \otimes _k (k \cdot u) \rightarrow \bar{\pi}_2$ induced by $f$ is an embedding, then $f$ itself is an embedding.
\end{lem}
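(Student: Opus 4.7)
The plan is to argue by contradiction: assume $K := \ker(f) \neq 0$ and produce some nonzero $u \in U$ for which $f$ restricted to $\bar{\pi}_1 \otimes_k (k \cdot u)$ fails to be injective. The key preliminary observation is the identification $\soc(\bar{\pi}_1 \otimes_k U) = \soc(\bar{\pi}_1) \otimes_k U$: the inclusion $\supseteq$ is immediate, and for $\subseteq$ one notes that any irreducible subrepresentation of $\bar{\pi}_1 \otimes_k U$ is cyclic, hence sits inside $\bar{\pi}_1 \otimes_k U'$ for some finite-dimensional $U' \subset U$; in the admissible representation $\bar{\pi}_1 \otimes_k U' \cong \bar{\pi}_1^{\oplus \dim U'}$ the socle computation is standard.

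The heart of the argument is to show that $f$ is injective on $\soc(\bar{\pi}_1) \otimes_k U$. Decompose $\soc(\bar{\pi}_1) = \bigoplus_j \tau_j$ into pairwise non-isomorphic simples (by multiplicity-freeness). Under the adjunction $\Hom_G(\tau_j \otimes_k U, \bar{\pi}_2) \cong \Hom_k(U, \Hom_G(\tau_j, \bar{\pi}_2))$, the restriction $f|_{\tau_j \otimes_k U}$ corresponds to a $k$-linear map $\alpha_j : U \to \Hom_G(\tau_j, \bar{\pi}_2)$. Applying the hypothesis to the subrepresentation $\tau_j \hookrightarrow \bar{\pi}_1$ forces $\alpha_j(u) \neq 0$ for each nonzero $u \in U$, so $\alpha_j$ is injective and hence $f|_{\tau_j \otimes_k U}$ is injective. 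Since the $\tau_j$ are pairwise non-isomorphic, the images of these restrictions land in distinct isotypic components of $\bar{\pi}_2$, and therefore $f|_{\soc(\bar{\pi}_1) \otimes_k U}$ is injective.

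To conclude, if $K \neq 0$ then any nonzero $x \in K$ lies in $\bar{\pi}_1 \otimes_k U'$ for some finite-dimensional $U' \subset U$, making $K \cap (\bar{\pi}_1 \otimes_k U')$ a nonzero subrepresentation of the admissible representation $\bar{\pi}_1^{\oplus \dim U'}$. Its socle is nonzero (the socle is essential inside an admissible smooth representation) and sits in $\soc(\bar{\pi}_1) \otimes_k U \cap K = 0$ by the previous step, a contradiction. The main subtlety I foresee is the adjunction step when some $\End_G(\tau_j)$ is a division algebra strictly larger than $k$; in the intended application the relevant $\tau_j$ are absolutely irreducible and no adjustment is needed, but in general one would replace $k$-linearity by $\End_G(\tau_j)$-linearity throughout.
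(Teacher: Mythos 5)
Your proof is correct and takes essentially the same route as the paper's: identify $\soc(\bar{\pi}_1\otimes_k U)$ with $\soc(\bar{\pi}_1)\otimes_k U$, then use multiplicity-freeness of the socle together with the hypothesis on the lines $k\cdot u$ to show a nonzero kernel is impossible, with your adjunction/isotypic-component phrasing being just a more detailed packaging of the paper's direct argument that the kernel would contain some $\bar{\pi}_{1,i}\otimes_k(k\cdot u)$. The caveat you flag about $\End_G(\tau_j)=k$ is implicitly present in the paper's own proof at exactly that step, and is harmless in the intended application where the socle constituents are absolutely irreducible.
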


\begin{proof} Write $soc (\bar{\pi}_1) = \oplus _{i=1} ^s \bar{\pi} _{1,i}$ where $\bar{\pi} _{1,i}$ are pair-wise non-isomorphic irreducible admissible smooth $G$-representations. We have the isomorphism
$$\bigoplus _{i=1} ^s \bar{\pi}_{1,i} \otimes _k U \simeq soc (\bar{\pi}_1) \otimes _k U \simeq soc(\bar{\pi}_1 \otimes _k U)$$
If $f$ has a non-zero kernel, then this kernel has a non-zero socle, hence it has a non-zero intersection with $\bar{\pi}_{1,i} \otimes _k U$ for at least one $i$. Thus it contains $\bar{\pi}_{1,i} \otimes _k (k\cdot u)$ for some non-zero $u \in U$. But this means that the map $\bar{\pi}_1 \otimes _k (k \cdot u) \rightarrow \bar{\pi}_2$ is not injective, which contradicts our assumption.
\end{proof}

\begin{proof}[Proof of Theorem 1]
By Proposition 2 it is enough to show that $ev _{X_{ctf}}$ is an isomorphism. By Proposition 1, this is equivalent to $ev _{X _{ctf},k}[\mf{m}]$ being injective, and $X_{ctf}$ being faithful over $\mathbb{T}  _{\Sigma , \mf{m}}$. First, in regards to the faithfulness: We will show later that $ev_{X,E}$ is certainly onto (this is part of Proposition 4 below). In particular, $X$ is faithful. Now Proposition C.40 in \cite{em1} tells us that so is $X_{ctf}$. What remains is the injectivity of the evaluation map,
$$
\bar{\pi}_p\otimes_k (X_{ctf}/\varpi X_{ctf})[\m] \longrightarrow H^{0} _{k, \Sigma, \m}[\m].
$$
By Lemma 1 it suffices to show, for every nonzero $u \in (X_{ctf}/\varpi X_{ctf})[\m]$, that the induced map $\bar{\pi}_p \rightarrow H^{0} _{k, \Sigma, \m}[\m]$ is injective.
However, each such $G(\Q_p)$-equivariant map is either trivial or injective -- by our standing hypothesis on $\bar{\rho}$. Namely, by known properties of mod $p$ local Langlands for $\GL_2(\Q_p)$, we know that $\bar{\pi}_p$ is irreducible since each restriction $\bar{\rho}|_{\Gamma_{\K_{\vv}}}$ is assumed to be irreducible for all $v|p$.
\end{proof}

%-------------------------------------------------------

\section{On coadmissible Hecke modules}

\subsection{Allowable points}

\noindent Following standard terminology in the subject, we define {\it{allowable points}} to be those prime ideals $\mathfrak{p} \in \Spec \mathbb{T}  _{\Sigma, \m} [1/p]$ 
for which the specialization $\rho_{\m}(\p)$ is crystalline above $p$, with distinct Hodge-Tate weights ("regular").

\medskip

\noindent We record the following lemma for later use.

\begin{lem}
If $\mathfrak{p} \in \Spec \mathbb{T}  _{\Sigma, \m} [1/p]$ is an allowable point, then there is a $U(F_{\Sigma_0})$-equivariant $\kappa (\mathfrak{p})$-linear isomorphism
$$
(X\otimes_{\mc{O}}E)[\p]\simeq \otimes_{v\in \Sigma_0} \pi(\rho_{\m,\vv}(\p))=: \pi_{\Sigma_0}(\rho_{\m}(\p)).
$$
Furthermore, the evaluation map induces an isomorphism
$$
ev_{X,E}[\p]: \Pi_p(\p)\otimes_{\kappa(\p)} (X\otimes_{\mc{O}}E)[\p] \overset{\sim}{\longrightarrow} (\widehat{H} ^{0} _{E, \Sigma, \m}[\p]_{loc.alg.})^-=: M(\p)_E.
$$
(The closure of the space of locally algebraic vectors in $\widehat{H} ^{0} _{E, \Sigma, \m}[\p]$.)
\end{lem}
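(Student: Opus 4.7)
The plan is to reduce the lemma to classical local-global compatibility at $\p$ and then apply Schur's lemma for the $p$-adic local Langlands representation $\Pi_p(\p)$.

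First I would identify the locally algebraic vectors. Since $\p$ is allowable, each $\Pi_v(\p)$ admits a dense subspace of locally algebraic vectors which, by crystalline $p$-adic Langlands for $\GL_2(\Q_p)$, is an algebraic representation tensored with the smooth representation attached to $\rho_{\m,\vv}(\p)$. Under our modularity and bigness hypotheses the specialization $\rho_{\m}(\p)$ corresponds to a classical automorphic representation of $G(\mb{A})$; combining multiplicity one for definite unitary groups in two variables with property (3) of local Langlands in families at $\Sigma_0$, one expects an identification
$$\widehat{H}^0_{E,\Sigma,\m}[\p]_{loc.alg.} \simeq \Pi_p(\p)_{loc.alg.} \otimes_{\kappa(\p)} \pi_{\Sigma_0}(\rho_{\m}(\p)).$$
Since by our assumption $\bar{\rho}|_{\Gamma_{\K_{\vv}}}$ is irreducible for each $v|p$, the representation $\Pi_p(\p)$ is absolutely topologically irreducible and is the unique admissible unitary completion of its locally algebraic vectors. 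Taking closures then yields
$$M(\p)_E \simeq \Pi_p(\p)\, \widehat{\otimes}_{\kappa(\p)}\, \pi_{\Sigma_0}(\rho_{\m}(\p)).$$

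Next I would compute $(X\otimes_{\mc{O}}E)[\p]$ via the adjunction
$$(X\otimes_{\mc{O}}E)[\p] \simeq \Hom^{cts}_{G(\Q_p)}\bigl(\Pi_p(\p),\widehat{H}^0_{E,\Sigma,\m}[\p]\bigr),$$
noting that every continuous $G(\Q_p)$-equivariant map sends locally algebraic vectors to locally algebraic vectors, so its image lies in the closure $M(\p)_E$. Schur's lemma applied to the absolutely irreducible $\Pi_p(\p)$, together with the description of $M(\p)_E$ from the previous paragraph, then identifies this Hom space with $\pi_{\Sigma_0}(\rho_{\m}(\p))$. This gives the first assertion; moreover, the identification is adjoint to $ev_{X,E}[\p]$ by construction, so the evaluation map is automatically an isomorphism onto $M(\p)_E$.

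The main obstacle I anticipate is the first step: establishing classical local-global compatibility at allowable $\p$ in the precise form needed, in particular pinning down the factor at $\Sigma_0$ to be exactly $\otimes_{v\in\Sigma_0}\pi(\rho_{\m,\vv}(\p))$. This relies on property (3) of local Langlands in families to identify the local factor, together with multiplicity one for definite unitary groups in two variables, which is available under our bigness and irreducibility assumptions on $\bar{\rho}$. Once these inputs are in place, the remaining steps are an essentially formal combination of $p$-adic local Langlands for $\GL_2(\Q_p)$ with Schur's lemma.
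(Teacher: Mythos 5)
Your overall skeleton matches the paper's: describe the locally algebraic vectors of $\widehat{H}^0_{E,\Sigma,\m}[\p]$ automorphically via multiplicity one, take the closure to get $M(\p)_E\simeq \Pi_p(\p)\otimes\pi_{\Sigma_0}(\rho_{\m}(\p))$, identify $(X\otimes_{\mc{O}}E)[\p]$ with a Hom space out of $\Pi_p(\p)$, and show any such Hom lands in $M(\p)_E$. But there is one genuine misstep: you propose to pin down the $\Sigma_0$-factor of the contributing automorphic representation using ``property (3) of local Langlands in families.'' That property is a \emph{defining} desideratum of the Emerton--Helm family $\Pi_{\Sigma_0}$ attached to $\rho_{\m,\vv}$; at this stage of the argument $\Pi_{\Sigma_0}$ has no established relation to completed cohomology (establishing that relation is precisely the point of Proposition 2, for which this Lemma is an ingredient), so invoking it here is circular and proves nothing about $\widehat{H}^0_{E,\Sigma,\m}[\p]$. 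What the paper actually uses is \emph{classical} local-global compatibility at the places of $\Sigma_0$ (for the Galois representations attached to automorphic representations of $U(2)$, via base change, with the uniqueness of the contributing $\pi$ coming from its prescribed Hecke eigensystem away from $\Sigma$), together with the observation that $\pi_{\Sigma_0}$ is generic, so the classical correspondence agrees with the Breuil--Schneider generic correspondence defining $\pi(\rho_{\m,\vv}(\p))$. Your ``main obstacle'' paragraph correctly names the difficulty but assigns it the wrong (and unusable) tool.

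The other divergence is in how you compute the Hom space: you stay at the Banach level, asserting absolute topological irreducibility of $\Pi_p(\p)$ and applying a Schur-type lemma to $\Hom^{cts}_{G(\Q_p)}(\Pi_p(\p),\Pi_p(\p)\otimes\pi_{\Sigma_0})$. This can likely be made to work (irreducibility of $\bar{\rho}|_{\Gamma_{\K_{\vv}}}$ does force $\rho_{\m,\vv}(\p)$ to be irreducible crystalline, whence Berger--Breuil gives topological irreducibility locally), but it requires nontrivial extra input: irreducibility of the completed tensor product over all $v|p$ and a Schur lemma for admissible unitary Banach representations, none of which you justify. The paper sidesteps this entirely: since $\Pi_p(\p)$ is the \emph{universal} unitary completion of the locally algebraic representation $BS(\p)$ (Berger--Breuil), one has $\Hom(\Pi_p(\p),-)=\Hom(BS(\p),-)$ for unitary targets, the image of $BS(\p)$ lies in the locally algebraic vectors, and then only the irreducibility of the locally algebraic $BS(\p)$ (Prasad's theorem) is needed to extract $\pi_{\Sigma_0}(\rho_{\m}(\p))$. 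You should either supply the Banach-level Schur input or, more economically, descend to $BS(\p)$ as the paper does.
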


\begin{proof}
Since $\rho_{\m,\vv}(\p)$ is crystalline and regular, a fundamental result of Berger and Breuil tells us that $\Pi_v(\p)$ is the universal unitary completion of
the locally algebraic representation $BS(\rho_{\m,\vv}(\p))$ associated with $\rho_{\m,\vv}(\p)$ by Breuil and Schneider. A fortiori, $\Pi_p(\p)=\hat{\otimes}_{v|p}\Pi_v(\p)$ 
is the universal unitary completion of $BS(\p):=\otimes_{v|p} BS(\rho_{\m,\vv}(\p))$ -- a locally algebraic representation of $G(\Q_p)$ over $\kappa(\p)$, which we factor as $\xi(\p)\otimes_{\kappa(\p)} \pi(\p)$. Here the algebraic part $\xi(\p)$ records the Hodge-Tate weights, and the smooth part $\pi(\p)$ corresponds to the Weil-Deligne representations
$WD(\rho_{\m,\vv}(\p))$ under the classical local Langlands correspondence.

\medskip

\noindent The locally algebraic vectors in $\widehat{H} ^{0} _{E, \Sigma, \m}[\p]$ have a description in terms of automorphic representations $\pi$ of $G$,
$$
\widehat{H} ^{0} _{E, \Sigma, \m}[\p]_{loc.alg.}={\bigoplus}_{\pi: \pi_{\infty}=\xi(\p)} BS(\p) \otimes \pi_{\Sigma_0} \otimes (\pi_f^{\Sigma})^{K^{\Sigma}}[\p].
$$
Here we have used multiplicity one ($m_{\pi}=1$) for $G$, and local-global compatibility at $p$ (due to Caraiani and others). There is a unique $\pi$ contributing to the right-hand side -- its base change to $\GL_2(\A_{\K})$ is uniquely determined, being cuspidal and having a prescibed Hecke eigensystem away from $\Sigma$. (Base change is "injective" at the inert places where $\pi$ is unramified.) Taking the closure, 
$$
M(\p)_E\simeq \Pi_p(\p)\otimes_E \pi_{\Sigma_0}(\rho_{\m}(\p)).
$$
We have used that $\pi_{\Sigma_0}$ is generic (so local Langlands and generic local Langlands coincide), local-global compatibility holds at the places in $\Sigma_0$, and $\pi_f^{\Sigma}$ is unramified.

\medskip

\noindent Hence, again using the fact that $\Pi _p(\p)$ is the completion of the locally algebraic representation $BS(\p)$ -- which is irreducible (by a result of D. Prasad -- see Theorem 1, part 2, p. 126 in his Appendix to \cite{ST}), we have
$$\Hom _{E[G(\Q _p)]} (\Pi _p(\p), \widehat{H}^0 _{E,\Sigma, \m}[\p])=$$
$$\Hom _{E[G(\Q _p)]} (BS(\p), \widehat{H}^0 _{E,\Sigma, \m}[\p])=$$
$$\Hom _{E[G(\Q _p)]} (BS(\p), \widehat{H}^0 _{E,\Sigma, \m}[\p] _{loc.alg.})=$$
$$\Hom _{E[G(\Q _p)]} (BS(\p), M(\p)_E)=$$
$$\Hom _{E[G(\Q _p)]} (\Pi _p(\p), M(\p)_E)$$

\noindent This string of equalities, together with the natural identification
$$
(X \otimes _{\mathcal{O}} E)[\mathfrak{p}] \simeq \Hom _{E[G(\Q _p)} (\Pi _p(\p), \widehat{H}^0 _{E,\Sigma, \m}[\p]),
$$
implies
$$
(X \otimes _{\mathcal{O}} E)[\mathfrak{p}] \simeq \Hom _{E[G(\Q _p)} (\Pi _p(\p), M(\p)_E),
$$
from which both statements of the Lemma are easily deduced.
\end{proof}

\noindent Of course, the way this will be used below, is by showing that the allowable points $\mc{S}_{al.}$ are Zariski dense -- so that we may take $\mc{S}=\mc{S}_{al.}$ in part (3) of Theorem 2 which characterizes $\Pi_{\Sigma_0}$. The Lemma identifies $(X \otimes _{\mathcal{O}} E)[\mathfrak{p}]$ with $\Pi_{\Sigma_0}\otimes_{ \mathbb{T}  _{\Sigma, \m}}\kappa(\p)$, for $\p \in \mc{S}_{al.}$. We will eventually use this in the proof of Proposition 2. 

\subsection{Proof of Proposition 1 -- following Emerton}

\noindent Recall that a submodule $Y$ of a $\mathcal{O}$-torsion free module $X$ is {\it{saturated}} if $X/Y$ is $\mathcal{O}$-torsion free (see Definition C.6 in \cite{em1}). The proof of the following Proposition is based on that of Proposition 6.4.2, p. 82, in \cite{em1}. Here, for convenience, we will walk the reader through the relevant facts from Emerton's comprehensive Appendix C on coadmissible modules.  

\begin{prop}
If $Y\subset X$ is a saturated coadmissible $\mathbb{T}  _{\Sigma, \m} [U(F_{\Sigma_0})]$-submodule, then the following conditions are equivalent:

\begin{itemize}
\item[(a)] $Y$ is a faithful $\mathbb{T}  _{\Sigma, \m}$-module.
\item[(b)] For each level $K_{\Sigma _0} \subset U(F_{\Sigma_0})$, the submodule of invariants $Y ^{K_{\Sigma _0}}$ is a faithful $\mathbb{T} _{\Sigma}(K _{\Sigma _0})_{\m}$-module.
\item[(c)] For each classical allowable closed point $\mathfrak{p} \in \Spec \mathbb{T}  _{\Sigma, \m} [1/p]$, the inclusion 
$(Y \otimes _{\mathcal{O}} E)[\mathfrak{p}] \subset (X \otimes _{\mathcal{O}} E)[\mathfrak{p}]$ is an equality.
\item[(d)] For each level $K_{\Sigma _0} \subset G_{\Sigma _0}$, and for each classical allowable closed point $\mathfrak{p} \in \Spec \mathbb{T}  _{\Sigma, \m} [1/p]$, the inclusion $(Y ^{K_{\Sigma _0}}\otimes_{\mc{O}}E) [\mathfrak{p}] \subset (X ^{K _{\Sigma _0}}\otimes_{\mc{O}}E)[\mathfrak{p}]$ is an equality.
\item[(e)] $X_{ctf} \subset Y$.
\end{itemize}
\end{prop}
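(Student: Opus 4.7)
The overall plan is to verify the equivalences in two stages. First, I would reduce the full-level conditions $(a)$ and $(c)$ to their finite-level counterparts $(b)$ and $(d)$ using coadmissibility and smoothness of $Y$; then establish the heart of the equivalence, $(b)\Leftrightarrow(d)$, by exploiting the Zariski density of classical allowable points in each finite-level Hecke algebra together with Lemma 2; finally, fold condition $(e)$ into the picture using the universal property of $X_{ctf}$.

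For $(a)\Leftrightarrow(b)$, smoothness of $Y$ gives $Y=\varinjlim_{K_{\Sigma_0}}Y^{K_{\Sigma_0}}$, and on each $Y^{K_{\Sigma_0}}$ the $\T_{\Sigma,\m}$-action factors through $\T_{\Sigma}(K_{\Sigma_0})_{\m}$. Since $\T_{\Sigma,\m}$ is by construction the faithful quotient acting on $\hat{H}^{0}_{\mc{O},\Sigma,\m}$, it injects into $\varprojlim_{K_{\Sigma_0}}\T_{\Sigma}(K_{\Sigma_0})_{\m}$, and the annihilator of $Y$ is the intersection of the preimages of the annihilators of the $Y^{K_{\Sigma_0}}$. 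For $(c)\Leftrightarrow(d)$, at every classical allowable $\p$ the space $(X\otimes_{\mc{O}}E)[\p]$ is a smooth $U(F_{\Sigma_0})$-representation (by Lemma 2 it is $\pi_{\Sigma_0}(\rho_{\m}(\p))$), so it equals $\varinjlim_{K_{\Sigma_0}}(X^{K_{\Sigma_0}}\otimes_{\mc{O}}E)[\p]$, and similarly for $Y$; the global inclusion is therefore equivalent to the compatible family of finite-level inclusions.

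The heart of the argument is $(b)\Leftrightarrow(d)$ at a fixed level $K_{\Sigma_0}$. Here $\T_{\Sigma}(K_{\Sigma_0})_{\m}$ is a reduced finite $\mc{O}$-algebra, and the classical allowable points are Zariski dense in $\Spec \T_{\Sigma}(K_{\Sigma_0})_{\m}[1/p]$ — a standard density fact proved using the existence of sufficiently many crystalline regular lifts with prescribed tame types. Lemma 2, combined with local--global compatibility at $p$ and multiplicity one for $G$, identifies $(X^{K_{\Sigma_0}}\otimes_{\mc{O}}E)[\p]$ with $\pi_{\Sigma_0}(\rho_{\m}(\p))^{K_{\Sigma_0}}$. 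Granting that $X^{K_{\Sigma_0}}$ is itself faithful over $\T_{\Sigma}(K_{\Sigma_0})_{\m}$ (which will follow from the surjectivity of $ev_{X,E}$ promised in this section, and ultimately from the definition of the Hecke algebra as a faithful quotient), $(b)\Rightarrow(d)$ follows since faithfulness of the finitely generated module $Y^{K_{\Sigma_0}}$ forces $(Y^{K_{\Sigma_0}}\otimes_{\mc{O}}E)[\p]\neq 0$ at every classical allowable $\p$, and a nonzero inclusion into the essentially AIG representation $\pi_{\Sigma_0}(\rho_{\m}(\p))^{K_{\Sigma_0}}$ is forced to be an equality (because such a representation has irreducible and multiplicity-free socle, so any nonzero submodule with the same socle spans the whole thing). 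Conversely, $(d)\Rightarrow(b)$, because the support of $Y^{K_{\Sigma_0}}$ is Zariski closed and contains the dense set of classical allowable primes, hence equals the full spectrum.

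Finally, I would link $(e)$ to the rest via the universal property of $X_{ctf}$. The implication $(e)\Rightarrow(a)$ is immediate once one knows $X_{ctf}$ is faithful, which follows from Proposition C.40 in \cite{em1} applied to the already-established faithfulness of $X$. For $(a)\Rightarrow(e)$, I would study the image $\bar Z$ of $X_{ctf}$ in $X/Y$ and argue it must vanish: at each classical allowable $\p$ its specialization is trapped between $(Y\otimes_{\mc{O}}E)[\p]$ and $(X\otimes_{\mc{O}}E)[\p]$, which agree by $(c)$ (already equivalent to $(a)$); invoking the fact that a cotorsion-free coadmissible module is detected by its classical specializations (together with Zariski density) then forces $\bar Z = 0$. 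The main obstacle I anticipate is the dimensional bookkeeping in step three — specifically, confirming at the outset that $X^{K_{\Sigma_0}}$ is faithful over $\T_{\Sigma}(K_{\Sigma_0})_{\m}$, and verifying that the essentially AIG structure of $\pi_{\Sigma_0}(\rho_{\m}(\p))$ is preserved upon taking $K_{\Sigma_0}$-invariants so that the upgrade from ``nonzero inclusion'' to ``equality'' really goes through at each allowable point.
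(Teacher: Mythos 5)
Your overall architecture differs from the paper's in a way that introduces a genuine gap. The paper does \emph{not} establish (a)\,$\Leftrightarrow$\,(b) and (c)\,$\Leftrightarrow$\,(d) first and then bridge them; it runs the cycle (b)\,$\Rightarrow$\,(a)\,$\Leftrightarrow$\,(c)\,$\Leftrightarrow$\,(d)\,$\Rightarrow$\,(b), with the crucial work done at \emph{full} level: for allowable $\p$, Lemma~2 shows $(X\otimes_{\mc{O}}E)[\p]\simeq\pi_{\Sigma_0}(\rho_\m(\p))$ is an \emph{irreducible} $U(F_{\Sigma_0})$-representation (each local factor is irreducible generic), so for a $U(F_{\Sigma_0})$-equivariant submodule $Y$, the subrepresentation $(Y\otimes_{\mc{O}}E)[\p]$ is either $0$ or everything. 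That dichotomy, combined with Zariski density and reducedness (via Proposition C.36 of \cite{em1}) gives (a)\,$\Leftrightarrow$\,(c); then (c)\,$\Leftrightarrow$\,(d) is formal because taking $K_{\Sigma_0}$-invariants commutes with $[\p]$; and (d)\,$\Rightarrow$\,(b) goes through the cosupport machinery (Proposition C.22).

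Your plan instead tries to prove (b)\,$\Leftrightarrow$\,(d) directly at a fixed level $K_{\Sigma_0}$, and the step where you upgrade ``$(Y^{K_{\Sigma_0}}\otimes E)[\p]\neq 0$'' to equality with $(X^{K_{\Sigma_0}}\otimes E)[\p]$ does not go through. Once you have taken $K_{\Sigma_0}$-invariants you no longer have a $U(F_{\Sigma_0})$-module, only a $\kappa(\p)$-vector space with a Hecke action, and a nonzero subspace of a finite-dimensional vector space need not be the whole thing. Your parenthetical justification --- that an essentially AIG representation has irreducible socle so ``any nonzero submodule with the same socle spans the whole thing'' --- is false even at the level of $U(F_{\Sigma_0})$-representations (a nonzero submodule of an essentially AIG representation shares the socle but can be proper), and in any case essentially AIG is a property of $\GL_2(L)$-representations that does not pass to invariants. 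You flag precisely this worry yourself at the end, and it does not resolve. The fix is to drop the ambition of working entirely at finite level: use the irreducibility of $(X\otimes_{\mc{O}}E)[\p]$ as a $U(F_{\Sigma_0})$-representation (not merely that it is essentially AIG) to get the dichotomy at full level, then pass to invariants to get (d). Your treatment of (e) is essentially sound, though the paper's route --- characterizing $X_{ctf}$ as the unique saturated coadmissible faithful cotorsion-free submodule, then using Proposition C.40/C.41 --- is cleaner than analyzing the image in $X/Y$ directly.
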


\begin{proof}
As a $\mb{T} _{\Sigma, \m}[U(F_{\Sigma_0})]$-module, $X$ is cofinitely generated and coadmissible (as explained in \cite{cs}, subsection 4.9). Hence, by Corollary C.34 from \cite{em1}, since $Y$ is saturated in $X$, we infer that $Y$ is cofinitely generated over $\mb{T} _{\Sigma, \m}$.

\medskip

\noindent Observe that clearly (c) $\Leftrightarrow$ (d), and (b) $\Rightarrow$ (a).

\medskip

\noindent (a) $\Leftrightarrow$ (c): Lemma 2 above shows that, for any allowable point $\p \in \Spec \mb{T} _{\Sigma, \m}[1/p]$, the $U(F_{\Sigma_0})$-representation $(X \otimes _{\mc{O}} E) [\p]$ is in particular irreducible. Hence, for any allowable $\p$, we have $(Y \otimes _{\mc{O}} E)[\p] \not = 0$ if and only if $(Y \otimes _{\mc{O}} E)[\p] = (X \otimes _{\mc{O}} E)[\p]$. We know from our previous work that the allowable points are Zariski dense in $\Spec \mb{T} _{\Sigma, \m}[1/p]$ (see \cite{cs}, subsection 4.6), and since $\mb{T} _{\Sigma, \m}$ is reduced, we conclude from Proposition C.36 of \cite{em1}, that (a) and (c) are equivalent.

\medskip

\noindent (d) $\Rightarrow$ (b): By the irreducibility remarks just made, $(Y^{K_{\Sigma _0}}\otimes_{\mc{O}}E)[\p] \not = 0$ for each allowable point $\p \in \Spec \mb{T}_{\Sigma} (K_{\Sigma _0}) _{\m}$. Referring to Proposition C.22 in \cite{em1}, $\p$ therefore lies in the cosupport of $Y^{K_{\Sigma _0}}$ -- which by the preceding remarks in loc. cit. means 
$\p$ contains the annihilator ideal $\Ann_{ \mb{T}_{\Sigma} (K_{\Sigma _0}) _{\m}}(Y^{K_{\Sigma _0}})$. By the aforementioned density, $\cap \p=0$. Consequently, the annihilator ideal is  trivial.

\medskip

\noindent Altogether this shows the first four conditions are equivalent. We end by showing they are all equivalent to (e).
Suppose we can prove that $X_{ctf}$ can be characterized as the unique saturated, coadmissible $\mb{T}_{\Sigma, \m}[U(F_{\Sigma_0})]$-submodule of $X$ which is both faithful and cotorsion free over $\mb{T}_{\Sigma, \m}$. Note that, by Proposition C.40 of \cite{em1}, $Y_{ctf}$ is faithful if $Y$ is. As a result, admitting the posited characterization of $X_{ctf}$, we find that (a) implies $Y_{ctf} = X_{ctf}$ -- and, in particular condition (e) that $X_{ctf} \subset Y$. In conclusion, (a) $\Rightarrow$ (e).
For the converse, observe that the claim implies $X_{ctf}$ satisfies (a), the faithfulness. Thus, if $X_{ctf} \subset Y$, then $Y$ automatically satisfies (a) as well. To summarize, 
(a) and (e) are equivalent.

\medskip

\noindent We are left with proving the characterization of $X_{ctf}$. As $X$ trivially satisfies (c), it also satisfies (a), and by Proposition C.40 of \cite{em1}, we get that $X_{ctf}$ satisfies (a). Hence $X_{ctf}$ {\it{is}} a faithful $\mb{T}_{\Sigma, \m}$-cotorsion free $\mb{T}_{\Sigma, \m}[U(F_{\Sigma_0})]$-submodule of $X$. Let $Y$ be any saturated, coadmissible $\mb{T}_{\Sigma, \m}[U(F_{\Sigma_0})]$-submodule of $X$ which is faithful and cotorsion free over $\mb{T}_{\Sigma, \m}$. Obviously $Y \subset X_{ctf}$. Moreover, since we know (a) implies (c), we get an equality $(Y \otimes _{\mc{O}} E)[\p] =(X_{ctf} \otimes _{\mc{O}}E)[\p]$ for allowable points $\p$ -- which are Zariski dense in $\Spec \mb{T} _{\Sigma, \m}$. Thus Proposition C.41 of \cite{em1} implies that $Y=X_{ctf}$, as desired.
\end{proof}

\noindent This is one of the key technical ingredients which goes into the proof of Proposition 1. The other is a variant of "Zariski density of crystalline points":

\begin{prop}
Let $Y\subset X$ be a saturated coadmissible $\mathbb{T}  _{\Sigma, \m} [U(F_{\Sigma_0})]$-submodule, which satisfies the equivalent conditions (a)--(e) of the previous Proposition. Then $ev _{Y,E}$ is surjective. Equivalently, for each sufficiently small level $K_{\Sigma _0}$, the map 
$$
ev _{Y,E}(K_{\Sigma _0}): \Pi _p \hat{\otimes} _{\mathbb{T}  _{\Sigma, \m}} (Y^{K_{\Sigma_0}}\otimes_{\mc{O}}E) \longrightarrow \hat{H}^0(K_{\Sigma_0}K^{\Sigma})_{E,\m}
$$
is surjective.
\end{prop}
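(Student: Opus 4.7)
The strategy is to combine the saturation/coadmissibility of $Y$, the identification of images at allowable points supplied by Lemma 2, and Zariski density of the allowable locus, in the manner of Emerton's treatment of Proposition 6.4.6 of \cite{em1}. First I would dispose of the equivalence: since $\overset{\curlywedge}{\otimes}$ is by definition a direct limit of $\varpi$-adically completed tensor products over the levels $K_{\Sigma_0} \subset U(F_{\Sigma_0})$, $ev_{Y,E}$ is surjective if and only if each $ev_{Y,E}(K_{\Sigma_0})$ is surjective for sufficiently small $K_{\Sigma_0}$. Fix such a $K_{\Sigma_0}$ and set $S := \Pi_p \hat{\otimes}_{\mathbb{T}_{\Sigma,\m}} (Y^{K_{\Sigma_0}} \otimes_{\mc{O}} E)$, $T := \hat{H}^0(K_{\Sigma_0}K^{\Sigma})_{E,\m}$, and let $M \subset T$ denote the image of $ev_{Y,E}(K_{\Sigma_0})$. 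The goal is to prove $M = T$.

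Next I would check that $M$ is closed in $T$. Coadmissibility of $Y$ forces $Y^{K_{\Sigma_0}}$ to be cofinitely generated over $\mathbb{T}_{\Sigma}(K_{\Sigma_0})_{\m}$, and combined with the fact that $\Pi_p$ is an admissible unitary $\mathbb{T}_{\Sigma,\m}$-Banach representation of $G(\Q_p)$, this makes $S$ an admissible unitary $E$-Banach representation of $G(\Q_p)$. The map $ev_{Y,E}(K_{\Sigma_0})$ is then a continuous $G(\Q_p)$-equivariant morphism between admissible $E$-Banach representations, and any such morphism has closed image by Schneider-Teitelbaum.

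It remains to show that $M$ is dense. For each allowable point $\p \in \Spec \mathbb{T}_{\Sigma,\m}[1/p]$, Lemma 2 identifies the image of $ev_{X,E}[\p]$ with $M(\p)_E = (\hat{H}^0_{E,\Sigma,\m}[\p]_{loc.alg.})^-$; passing to $K_{\Sigma_0}$-invariants, the image of $ev_{X,E}(K_{\Sigma_0})[\p]$ is the closure in $T[\p]$ of its locally algebraic subspace. Condition (d) of Proposition 3 gives the equality $(Y^{K_{\Sigma_0}} \otimes_{\mc{O}} E)[\p] = (X^{K_{\Sigma_0}} \otimes_{\mc{O}} E)[\p]$, so $ev_{Y,E}(K_{\Sigma_0})[\p]$ has the same image, and hence $M$ contains every locally algebraic vector in $T[\p]$ for every allowable $\p$. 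It then suffices to know that the union over allowable $\p$ of the locally algebraic vectors in $T[\p]$ is dense in $T$, which would follow from Zariski density of the allowable locus in $\Spec \mathbb{T}_{\Sigma,\m}[1/p]$ (subsection 4.6 of \cite{cs}) via a dual argument on coadmissible modules in the spirit of Proposition C.36 of \cite{em1}. Combined with $M$ being closed, this would force $M = T$.

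The main obstacle is precisely this density statement in the last step: translating the algebraic Zariski density of the allowable locus into analytic density of locally algebraic vectors in the Banach space $T$. Everything else in the plan is a formal application of the admissibility/coadmissibility framework of Emerton's Appendix C together with the pointwise identification supplied by Lemma 2, but this final density conversion is where the real content of the proposition lives.
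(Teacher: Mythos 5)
Your overall architecture coincides with the paper's: you dispose of the stated equivalence via the definition of $\overset{\curlywedge}{\otimes}_{\T_{\Sigma,\m}}$ as a direct limit over levels, you prove closedness of the image of $ev_{Y,E}(K_{\Sigma_0})$ from admissibility of $\Pi_p\hat{\otimes}_{\T_{\Sigma,\m}}Y^{K_{\Sigma_0}}$ and of $\hat{H}^0(K_{\Sigma_0}K^{\Sigma})_{\mc{O},\m}$ (the paper does exactly this, via Lemma 3.1.16 and Proposition 3.1.3 of \cite{em1}, i.e. the Schneider--Teitelbaum input you invoke), and you obtain a large subspace of the image by comparing $Y$ with $X$ at allowable points through Proposition 3 (c)/(d) and the second half of Lemma 2. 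Up to that point your proposal matches the paper's proof step for step.

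The gap is precisely in the step you flag as the "main obstacle", and it is a gap of derivation rather than of structure. The density of $\bigoplus_{\p\ \text{allowable}} \hat{H}^0_{E,\Sigma,\m}[\p]_{loc.alg.}$ in $\hat{H}^0_{E,\Sigma,\m}$ does not follow from Zariski density of the allowable locus in $\Spec \T_{\Sigma,\m}[1/p]$ by a dual argument "in the spirit of Proposition C.36 of \cite{em1}": results of that type control cosupport and faithfulness (they give, e.g., $\bigcap_{\p}\p=0$), and give no information about the topological density of a sum of $\p$-eigenspaces -- much less of their locally algebraic vectors -- inside a Banach space. The statement actually needed is the analytic one that classical forms which are crystalline and regular at $p$ (hyperspecial level at $p$, varying weight) span a dense subspace of completed cohomology; this is the "Zariski density of crystalline points" which the paper does not reprove here but quotes as Proposition 3 in subsection 4.6 of \cite{cs}, where it rests on the density of algebraic functions inside continuous functions on $G(\Z_p)$ together with local-global compatibility at $p$, not on the Appendix C formalism. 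Your proof becomes complete once your proposed derivation of that density is replaced by this citation; as written, the step you isolate is both the real content and mis-derived.
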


\begin{proof}
The last  "equivalence" of the Proposition follows by taking $K_{\Sigma _0}$-invariants or, conversely, passing to the limit over all levels $K_{\Sigma _0}$ -- using the very definition of $\overset{\curlywedge}{\otimes}_{\mathbb{T}  _{\Sigma, \m}}$.

\medskip

\noindent We first prove that the image of $ev _{Y,E}$ contains $\oplus _{\p \in \mc{C}} \widehat{H}^0 _{E, \Sigma, \m}[\p] _{loc.alg.}$, where $\mc{C}$ is the set of classical allowable points $\p \in \Spec \mb{T} _{\Sigma, \m}[1/p]$. Proposition 3, part (c), gives us the equality $(Y \otimes _{\mc{O}} E)[\p] = (X \otimes _{\mc{O}} E)[\p]$. Therefore,
$$
\im(ev_{Y,E}[\p])=\im(ev_{X,E}[\p])=M(\p)_E\supset \widehat{H}^0 _{E, \Sigma, \m}[\p] _{loc.alg.},
$$
by the second half of Lemma 2. Letting $\p$ vary, we deduce our first claim that $\im(ev _{Y,E})$ contains $\oplus _{\p \in \mc{C}} \widehat{H}^0 _{E, \Sigma, \m}[\p] _{loc.alg.}$. However, the latter is {\it{dense}} in $\widehat{H}^0 _{E,\Sigma,\m}$, by "Zariski density of crystalline points" (see Proposition 3 in subsection 4.6 of \cite{cs}, for example). Thus, a priori, $\im(ev _{Y,E})$ is at least dense in $\widehat{H}^0 _{E,\Sigma,\m}$. However, $\im(ev _{Y,E})$ is in fact closed, as we now explain.

\medskip

\noindent Using Lemma 3.1.16 on p. 17 in \cite{em1}, we first note that $\Pi _p \widehat{\otimes} _{\mb{T} _{\Sigma, \m}} Y^{K_{\Sigma_0}}$ is a $\varpi$-adically admissible 
representation of $G(\Q _p)$ over $\mb{T} _{\Sigma, \m}$ (acting through the first factor of the tensor product). Admissibility of $\hat{H}^0(K_{\Sigma_0}K^{\Sigma})_{\mc{O},\m}$ is immediate (by finiteness of the class number of $G$). We may then apply Proposition 3.1.3 of \cite{em1} to the $G(\Q_p)$-map $ev_Y(K_{\Sigma_0})$ between admissible representations. It states precisely that the induced map on Banach $E$-spaces, $ev _{Y,E}(K_{\Sigma _0})$, has {\it{closed}} image. We conclude that each  $ev _{Y,E}(K_{\Sigma _0})$ must in fact be onto.
\end{proof}

%--------

\noindent Having established these preliminary results, we can finally proceed to the proof of Proposition 1. (We remark that the proof is almost identical to that of Emerton's Theorem 6.4.9, p. 85, in \cite{em1}.)

\begin{proof}[Proof of Proposition 1]
Suppose first that $ev_Y$ is an isomorphism. Since $\widehat{H} ^{0} _{\mathcal{O},\Sigma, \m}$ is a faithful $\mathbb{T}  _{\Sigma, \m}$-module, we see that $Y$ must be a faithful $\mathbb{T}  _{\Sigma, \m}$-module. Reducing the map $ev_Y$ modulo $\varpi$, and taking $\mathfrak{m}$-torsion, we find that $ev_{Y,k}[\mathfrak{m}]$ is at least injective. (It may not be onto, a priori.) This shows that (1) $\Rightarrow$ (2).

\medskip

\noindent (2) $\Rightarrow$ (1): Conversely, if $ev_{Y,k} [\mathfrak{m}]$ is injective, then by Lemma C.46 of \cite{em1}, we see that $ev_Y$ is injective, with saturated image. Lemma C.52 of \cite{em1} then implies that $Y$ must be saturated in $X$. If $Y$ is furthermore faithful as a $\mathbb{T}  _{\Sigma, \m}$-module, then Proposition 4 shows that $ev_{Y,E}$ is surjective. It follows that $ev_Y$ is in fact an isomorphism -- again, because $\im(ev_Y)$ is saturated in $\hat{H}_{\mc{O},\Sigma,\m}^0$.

\medskip

\noindent Finally, assume $ev_Y$ satisfies these conditions (1) and (2). We must show $Y=X_{ctf}$. As we have already noted in the previous paragraph, it follows that $Y$ is saturated in $X$, and so (e) of Proposition 3 shows one inclusion, $X_{ctf} \subset Y$. Since $X_{ctf}$ is saturated in $X$, and therefore also in $Y$, we conclude that the induced map
$$
(X_{ctf}/\varpi X_{ctf})[\mathfrak{m}] \longrightarrow (Y/\varpi Y)[\mathfrak{m}]
$$
is an embedding (even before taking $\m$-torsion), and thus that $ev _{X_{ctf},k} [\mathfrak{m}]$ is injective, since $ev_{Y,k} [\mathfrak{m}]$ is. The equivalence (1) $\Leftrightarrow $ (2) applied to $X_{ctf}$ then tells us that $ev_{X_{ctf}}$ is an isomorphism. In other words, the inclusion $X_{ctf} \subset Y$ induces an isomorphism
$$
 \Pi _p  \overset{\curlywedge}{\otimes} _{\mathbb{T}  _{\Sigma, \m}} X _{ctf} \overset{\sim}{\longrightarrow}  \Pi _p  \overset{\curlywedge}{\otimes} _{\mathbb{T}  _{\Sigma, \m}} Y.
 $$
It follows from Lemma C.51 of \cite{em1} that the inclusion $X_{ctf} \subset Y$ is an equality as required.
\end{proof}

\subsection{Proof of Proposition 2 -- Ihara's lemma for $U(2)$}

Let $W^{gl}(\bar{\rho})$ be the set of global Serre weights of $\bar{\rho}$. By definition, those are the finite-dimensional irreducible $k[G(\mb{Z}_p)]$-representations $V$ for which $$
\Hom _{k[G(\mathbb{Z} _p)]}(V, H^{0} _{k, \Sigma, \m}[\mathfrak{m}])\neq 0.
$$ 
It is easy to see that this space can be identified with $S_{V}(G(\Z_p)K^{\Sigma},k)_{\m}[\m]$, where $S_{V}(G(\Z_p)K^{\Sigma},k)$ is the space of mod $p$ algebraic modular forms of weight $V$, and level $G(\Z_p)K^{\Sigma}$. More concretely,
$$
S_{V}(G(\Z_p)K^{\Sigma},k)=\varinjlim_{K_{\Sigma_0}}S_{V}(G(\Z_p)K_{\Sigma_0}K^{\Sigma},k),
$$
where $S_{V}(G(\Z_p)K_{\Sigma_0}K^{\Sigma},k)$ is the (finite-dimensional) space of functions
$$
f: G(\Q)\backslash G(\A_f)/K_{\Sigma_0}K^{\Sigma} \rightarrow V^{\vee}, \y f(gu)=u^{-1}f(g), \y  u \in G(\Z_p).
$$
The Buzzard-Diamond-Jarvis conjecture (proved by Gee et al. in the case of unitary groups in two variables) gives an explicit description of $W^{gl}(\bar{\rho})$ in terms of the restrictions to inertia, $\{\bar{\rho}|_{I_{\K_{\vv}}}\}_{v|p}$. Recall from the introduction that $\bar{\rho}$ is assumed to occur at {\it{maximal}} level $K_{\Sigma_0}$, with weight
$V$ in the {\it{Fontaine-Laffaille}} range (which means the highest weights of $V$ lie in $[0,p-2)$). Once and for all, choose such a $V \in W^{gl}(\bar{\rho})$.
One of the goals of \cite{sor2} was to prove the following:

\begin{prop}
$S_{V}(G(\Z_p)K^{\Sigma},k)_{\m}[\m]$ is essentially AIG.
\end{prop}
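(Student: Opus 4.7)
The plan is to verify the three defining properties of essentially AIG for the $U(F_{\Sigma_0})$-representation $M := S_{V}(G(\Z_p)K^{\Sigma},k)_{\m}[\m]$ in turn, isolating Ihara's lemma for $U(2)$ as the single nontrivial input. Condition (3) is essentially formal: since $G$ is anisotropic at infinity and $V$ is finite-dimensional, each $M^{K_{\Sigma_0}}$ is finite-dimensional, and $M = \varinjlim_{K_{\Sigma_0}} M^{K_{\Sigma_0}}$ is therefore admissible and a union of its finite-length submodules.

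For conditions (1) and (2), I first identify the candidate generic socle. Banality at each $v \in \Sigma_0$, together with mod $p$ generic local Langlands (Vign\'eras), attaches to $\bar{\rho}|_{\Gamma_{\K_{\vv}}}$ an absolutely irreducible generic $k$-representation $\bar{\pi}_{\vv}$ of $\GL_2(\K_{\vv})$; set $\bar{\pi}_{\Sigma_0} := \otimes_{v \in \Sigma_0}\bar{\pi}_{\vv}$. Existence of an embedding $\bar{\pi}_{\Sigma_0} \hookrightarrow M$ is produced from classical input: pick an automorphic representation $\pi$ of $G$ with $\bar{\rho}_{\pi} \simeq \bar{\rho}$ (available by modularity), take a stable $\mc{O}$-lattice in the generic $\pi_{\Sigma_0}$, and reduce modulo $\varpi$ inside the integral space of algebraic modular forms; Brauer--Nesbitt then locates $\bar{\pi}_{\Sigma_0}$ inside the socle of the reduction.

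The heart of the proof is to upgrade this to $\soc(M) = \bar{\pi}_{\Sigma_0}$ with multiplicity one and to rule out any generic Jordan--H\"older factor of $M / \soc(M)$; this is where Ihara's lemma for $U(2)$ enters. I would proceed one place at a time: fix $v \in \Sigma_0$ and view $M$ as a $\GL_2(\K_{\vv})$-representation whose coefficients carry the residual $\prod_{w \neq v}\GL_2(\K_{\tilde{w}})$-action. Any irreducible generic subquotient at $v$ is necessarily isomorphic to $\bar{\pi}_{\vv}$ (by the uniqueness part of mod $p$ generic local Langlands), and the Iwahori-invariants functor is exact on the generic part. Ihara's lemma for $U(2)$ --- the injectivity of the degeneracy map from the direct sum of two copies of the hyperspecial-level $\m$-localized mod $p$ forms into the Iwahori-level forms --- then forces every generic subquotient of $M$ to already be a submodule and to appear with multiplicity one; running over all $v \in \Sigma_0$ establishes (1) and (2) simultaneously. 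Non-generic (character) subrepresentations are excluded by tracking the Hecke eigensystem through the mod $p$ Satake isomorphism, since such contributions would force $\bar{\rho}$ to be reducible at some $v \in \Sigma_0$, contradicting the irreducibility built into $\m$.

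The main obstacle is precisely this Ihara input: the characteristic-zero version follows from strong multiplicity one for $\GL_2/\K$, but the assertion required here is the mod $p$ statement \emph{after} localization at $\m$, which does not formally reduce to a characteristic-zero multiplicity theorem. This is exactly what is proved in the companion paper \cite{sor2}; once it is in hand, the remainder of the verification is largely bookkeeping.
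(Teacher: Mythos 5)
The paper's own proof of this proposition is a one-line citation: it is Theorem 4 of \cite{sor2}, which establishes the $U(n)$-analogue conditionally on Ihara's lemma (known, indeed essentially trivial, for $n=2$). Your proposal instead cites \cite{sor2} only for an Ihara-type input and tries to supply the deduction of the essentially AIG property yourself -- but that deduction is precisely where the nontrivial work lies, and as written it has a genuine gap. From the injectivity of degeneracy maps (equivalently, from the statement that every irreducible submodule of $S_V(G(\Z_p)K^{\Sigma},k)_{\m}$ is generic) it does \emph{not} follow that ``every generic subquotient of $M$ is already a submodule and appears with multiplicity one'': Ihara's lemma controls genericity of the socle, but says nothing by itself about multiplicity-freeness of the socle or about generic Jordan--H\"older factors of $M/\soc(M)$, which are exactly conditions (1) and (2) of the essentially AIG definition. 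Your sketch asserts these conclusions rather than proving them; this is the content that \cite{sor2} actually supplies, and which the paper imports wholesale. (A further inaccuracy: the representation $\bar{\pi}_{\vv}$ attached to $\bar{\rho}|_{\Gamma_{\K_{\vv}}}$ by mod $p$ generic local Langlands at a banal place is essentially AIG but need not be absolutely irreducible -- in the special/Steinberg configuration it has length two -- so the candidate socle should be $\otimes_{v\in\Sigma_0}\soc(\bar{\pi}_{\vv})$, not $\bar{\pi}_{\Sigma_0}$ itself.)

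There is also a concretely wrong step: you exclude one-dimensional (non-generic) submodules on the grounds that they would force $\bar{\rho}$ to be ``reducible at some $v\in\Sigma_0$, contradicting the irreducibility built into $\m$''. But no local irreducibility is assumed at the places of $\Sigma_0$ -- the irreducibility hypotheses concern $\bar{\rho}$ globally and at the places above $p$ -- and $\bar{\rho}|_{\Gamma_{\K_{\vv}}}$ for $v\in\Sigma_0$ may perfectly well be reducible (it is typically just unramified). The correct exclusion is global: a character submodule at $v$ factors through the reduced norm, and a strong-approximation argument then shows the corresponding Hecke eigensystem is Eisenstein, contradicting the \emph{global} absolute irreducibility of $\bar{\rho}$; this global argument is exactly Ihara's lemma for $U(2)$, i.e.\ the part that is easy for $n=2$, whereas the implication ``Ihara's lemma $\Rightarrow$ essentially AIG'' that you treat as bookkeeping is the actual theorem of \cite{sor2} that the paper cites.
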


\begin{proof}
Up to notational inconsistency, this is Theorem 4, p. 28 in \cite{sor2} -- which in fact gives a $U(n)$-analogue contingent on Ihara's lemma, which is known (even trivial) for $n=2$.
\end{proof}
 
\begin{cor}
$\Hom _{k[G(\Q_p)]}( \bar{\pi}_p, H ^{0} _{k, \Sigma, \m} [\mathfrak{m}])$ is essentially AIG.
\end{cor}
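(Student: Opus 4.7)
The plan is to exhibit $\Hom_{k[G(\Q_p)]}(\bar{\pi}_p, H^{0}_{k,\Sigma,\m}[\m])$ as a non-zero $U(F_{\Sigma_0})$-equivariant subrepresentation of $S_V(G(\Z_p)K^{\Sigma},k)_{\m}[\m]$ for a carefully chosen Serre weight $V$, and then invoke Proposition 5 together with the observation (recorded right after the definition in Section 2.2) that any non-zero subrepresentation of an essentially AIG representation is itself essentially AIG.

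\medskip

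\noindent The first task is to pick $V$. Since $\bar{\rho}|_{\Gamma_{\K_{\vv}}}$ is irreducible for each $v|p$, the mod $p$ local Langlands correspondence for $\GL_2(\Q_p)$ realizes each factor $\bar{\pi}_{\vv}$ of $\bar{\pi}_p = \otimes_{v|p} \bar{\pi}_{\vv}$ as an (absolutely irreducible) supersingular representation, whose $\GL_2(\Z_p)$-socle is an explicit sum of Serre weights determined by $\bar{\rho}|_{I_{\K_{\vv}}}$ via Breuil's recipe. A tensor product of such local weights produces a weight $V$ sitting inside the $G(\Z_p)$-socle of $\bar{\pi}_p$; by the weight part of Serre's conjecture for $U(2)$ (proved by Gee and collaborators, under the Fontaine--Laffaille hypothesis in force here), this $V$ lies in $W^{gl}(\bar{\rho})$.

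\medskip

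\noindent With $V$ fixed, restriction along the inclusion $V \hookrightarrow \bar{\pi}_p$ yields a $U(F_{\Sigma_0})$-equivariant map
$$
\Hom_{k[G(\Q_p)]}(\bar{\pi}_p, H^{0}_{k,\Sigma,\m}[\m]) \longrightarrow \Hom_{k[G(\Z_p)]}(V, H^{0}_{k,\Sigma,\m}[\m]) \simeq S_V(G(\Z_p)K^{\Sigma},k)_{\m}[\m].
$$
Injectivity is immediate: $\bar{\pi}_p$ is $G(\Q_p)$-irreducible, so its non-zero subspace $V$ generates it under the $G(\Q_p)$-action, and a $G(\Q_p)$-equivariant homomorphism out of $\bar{\pi}_p$ is therefore determined by its restriction to $V$. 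By Proposition 5 the target is essentially AIG, so the image is essentially AIG as soon as it is non-zero, which gives the corollary.

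\medskip

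\noindent The step I expect to be delicate is the non-vanishing: one must know that some $G(\Q_p)$-equivariant map $\bar{\pi}_p \to H^{0}_{k,\Sigma,\m}[\m]$ exists. This should be extracted from a non-zero $G(\Z_p)$-map $V \to H^{0}_{k,\Sigma,\m}[\m]$ (available by Proposition 5) via Frobenius reciprocity out of the compact induction of $V$, descending to $\bar{\pi}_p$ by using that the relevant spherical Hecke operators act on $H^{0}_{k,\Sigma,\m}[\m]$ by the supersingular eigenvalues dictated by $\bar{\rho}|_{\Gamma_{\K_{\vv}}}$; equivalently, this is mod $p$ local-global compatibility at $p$ in the definite setting. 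Together with the Serre-weight matching in the first paragraph, this is the place where the irreducibility hypothesis on $\bar{\rho}|_{\Gamma_{\K_{\vv}}}$ enters essentially.
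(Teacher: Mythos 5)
Your argument is essentially the paper's: restrict along an embedding of a Serre weight $V$ into the $G(\Z_p)$-socle of $\bar{\pi}_p$, use irreducibility of $\bar{\pi}_p$ (so $V$ generates it over $G(\Q_p)$) to see that the restriction map into $\Hom_{k[G(\Z_p)]}(V,H^{0}_{k,\Sigma,\m}[\m])\simeq S_{V}(G(\Z_p)K^{\Sigma},k)_{\m}[\m]$ is injective, and conclude by Proposition 5 together with the remark in 2.2 that non-zero subrepresentations of essentially AIG representations are essentially AIG. The only differences are cosmetic: the paper fixes the global weight $V\in W^{gl}(\bar{\rho})$ of the introduction and embeds it into $\bar{\pi}_p$ via Theorem A of \cite{GLS} (the equality $W^{gl}(\bar{\rho})=\soc_{k[G(\Z_p)]}(\bar{\pi}_p)$), rather than running Breuil's recipe and the weight part of Serre's conjecture in the opposite direction (which also keeps $V$ the one for which Proposition 5 is stated), and the non-vanishing you flag as delicate is not re-proved in the paper's argument either -- it is exactly the weak local-global compatibility already established in \cite{cs}, so you need not redo the compact-induction/Hecke-eigenvalue analysis here.
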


\begin{proof} Since it is known that $W^{gl}(\bar{\rho})=\soc_{k[G(\Z_p)]}(\bar{\pi}_p)$, see Theorem A in \cite{GLS},
we have an embedding $V \hookrightarrow \bar{\pi}_p|_{G(\Z_p)}$, whose image generates $\bar{\pi}_p$ over $G(\Q_p)$, because $\bar{\pi}_p\leftrightarrow \{\bar{\rho}|_{\Gamma_{\K_{\vv}}}\}_{v|p}$ is irreducible -- by known properties of mod $p$ local Langlands for $\GL_2(\Q_p)$. We infer that the restriction map
$$
\Hom _{k[G(\Q_p)]} ( \bar{\pi}_p, H ^{0} _{k, \Sigma, \m} [\mathfrak{m}]) \longrightarrow \Hom _{k[G(\mathbb{Z} _p)]}(V, H^{0} _{k, \Sigma, \m}[\mathfrak{m}])
$$
is injective, and the result follows from Proposition 5 and a remark in 2.2.
\end{proof}

%-------------------

\noindent We need one more preliminary result in order to show $X_{ctf}$ interpolates the local Langlands correspondence. (Analogous to Lemma 6.4.5, p. 83, in \cite{em1}, except that we only need allowable {\it{prime}} ideals.) 

\begin{lem}
The closure (in the sense of definition C.28 of \cite{em1}) of 
$$
\cup  _{\textrm{$\p$ allowable}} X[\p]
$$
in $X$ coincides with $X _{ctf}$. 
\end{lem}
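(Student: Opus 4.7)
The strategy is to establish the two inclusions separately: denote by $Z$ the closure in question, and prove $Z \subset X_{ctf}$ and $X_{ctf} \subset Z$ by invoking Proposition 3 in both directions. Throughout, $X[\p]$ is interpreted as the preimage in $X$ of $(X \otimes_{\mc{O}} E)[\p]$.

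For the inclusion $Z \subset X_{ctf}$: recall from the proof of Theorem 1 that $X_{ctf}$ is a faithful, saturated, coadmissible $\mathbb{T}_{\Sigma,\m}[U(F_{\Sigma_0})]$-submodule of $X$ (faithfulness coming via Proposition C.40 of \cite{em1} applied to the faithful $X$). Applying the equivalence (a) $\Leftrightarrow$ (c) of Proposition 3 with $Y = X_{ctf}$ yields $(X_{ctf}\otimes_{\mc{O}} E)[\p] = (X\otimes_{\mc{O}} E)[\p]$ for every allowable $\p$. Since $X_{ctf}$ is saturated in $X$, this forces $X[\p] \subset X_{ctf}$ for each such $\p$. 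As any saturated coadmissible submodule of $X$ is closed (a standard fact from Appendix C of \cite{em1}), taking the closure of the union gives $Z \subset X_{ctf}$.

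For the reverse inclusion $X_{ctf} \subset Z$, the plan is to apply the same Proposition 3 with $Y = Z$. The first step is to check that $Z$ is itself a saturated coadmissible $\mathbb{T}_{\Sigma,\m}[U(F_{\Sigma_0})]$-submodule of $X$ — this is the content of the closure formalism of Definition C.28 and the ensuing preservation lemmas in Appendix C of \cite{em1}. Once this is established, the construction of $Z$ shows that $Z \supset X[\p]$ for every allowable $\p$, whence $(Z\otimes_{\mc{O}} E)[\p] \supset (X\otimes_{\mc{O}} E)[\p]$ and hence equality. Thus $Z$ satisfies condition (c) of Proposition 3, and therefore also condition (e): $X_{ctf} \subset Z$. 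Combining the two inclusions gives $Z = X_{ctf}$.

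The main obstacle, as anticipated, is the verification that the closure $Z$ really is a \emph{saturated coadmissible} submodule of $X$ -- something that must be extracted from the general closure formalism of Emerton's Appendix C rather than from anything specific to the unitary setting. Once this bookkeeping is in place, the rest is a purely formal application of Proposition 3 in both directions, powered by the Zariski density of allowable points in $\Spec \mathbb{T}_{\Sigma,\m}[1/p]$ (recalled from subsection 4.6 of \cite{cs}), which is what makes the equivalence (a) $\Leftrightarrow$ (c) of Proposition 3 available in the first place.
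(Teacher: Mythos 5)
Your proposal is correct and mirrors the paper's argument closely: both inclusions hinge on Proposition 3, Zariski density of allowable points, and the closure formalism of Emerton's Appendix C (with C.30/C.31 supplying saturation and coadmissibility of the closure). The only cosmetic difference is in the reverse inclusion, where you apply the equivalence (c) $\Leftrightarrow$ (e) of Proposition 3 directly to $Z$, whereas the paper first checks faithfulness of the closure via the density argument $\cap\,\p = 0$ and then appeals to the characterization of $X_{ctf}$ (as the unique saturated, coadmissible, faithful, cotorsion-free submodule) extracted from the proof of Proposition 3(e) — a slightly longer route to the same conclusion.
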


\begin{proof}
We start off with a general remark, which will be used for both inclusions. Let $Y\subset X$ be an arbitrary saturated, coadmissible $\T_{\Sigma,\m}[U(F_{\Sigma_0})]$-submodule, which is a faithful $\T_{\Sigma,\m}$-module. By Proposition (3), part (c), we know that $Y[\p]=X[\p]$ for all allowable $\p$. (A priori, we only know this after tensoring with $E$, but $Y$ is saturated -- see also C.41 in \cite{em1}.) In particular, any such $Y$ contains the union $\cup  _{\textrm{$\p$ allowable}} X[\p]$, and therefore its closure $W$ since $Y$ is closed.
(By C.31 in loc. cit., this is equivalent to $Y$ being coadmissible.)

\medskip

\noindent ($\subset$) Take $Y=X_{ctf}$ in the previous paragraph.

\medskip

\noindent ($\supset$)  To show that $W \supset X_{ctf}$, first observe that $W$ is saturated (C.30), coadmissible (C.31), and {\it{faithful}}: If $t \in \T_{\Sigma,\m}$ acts trivially in $W$, it acts trivially on each $X[\p]$ -- in other words, $t \in \Ann_{\T_{\Sigma,\m}}X[\p]=\p$. By Zariski density, $\cap \p=0$, so that $t=0$. From the first paragraph of the proof, we conclude that $W$ is the {\it{minimal}} saturated coadmissible faithful submodule of $X$. Since $W_{ctf}\subset W$ has all these three properties, $W=W_{ctf}$ must be cotorsion-free. As we have already seen in the proof of Propsition 3, part (e), this implies $W=X_{ctf}$.
\end{proof}

\begin{proof}[Proof of Proposition 2]
Let $\mc{S}$ denote the set of closed points $\mathfrak{p} \in \Spec \mathbb{T}  _{\Sigma, \m}$, which are allowable. As noted above, $\mc{S}$ is Zariski dense. We want to show that $X _{ctf}$, $\mc{S}$ and $A = \mb{T} _{\Sigma, \mf{m}}$ satisfy the three conditions determining the local Langlands correspondence in families (see Section 2.3).

\begin{itemize}
\item[(1)] $X _{ctf}$ is cotorsion-free over $\mb{T} _{\Sigma, \mf{m}}$ by definition.
\item[(2)] $(X_{ctf}/\varpi X_{ctf})[\mathfrak{m}]$ is essentially AIG, since (by saturation) it sits inside
$$
(X/\varpi X)[\mathfrak{m}]\simeq \Hom _{k[G(\Q_p)]}( \bar{\pi}_p, H ^{0} _{k, \Sigma, \m} [\mathfrak{m}]),
$$
which was just shown to be essentially AIG.
\item[(3)] By Lemma 2 in section 4.1, for all $\p  \in \mc{S}$,
$$
(X\otimes_{\mc{O}}E)[\p]\simeq \otimes_{v\in \Sigma_0} \pi(\rho_{\m,\vv}(\p)).
$$
On the left-hand side, the previous lemma allows us to replace $X$ by $X_{ctf}$.
\end{itemize}

\end{proof}

%-----------------------------------------------------------------------------------------------------------

\noindent {\it{Przemyslaw Chojecki}}

\noindent {\sc{Institut Mathematique de Jussieu, Paris, France.}}

\noindent {\it{E-mail address}}: {\texttt{chojecki@math.jussieu.fr}}

\bigskip

\noindent {\it{Claus Sorensen}}

\noindent {\sc{Department of Mathematics, UCSD, La Jolla, CA, USA.}}

\noindent {\it{E-mail address}}: {\texttt{csorensen@ucsd.edu}}

\end{document}